%

\documentclass[aop,MSNbibl,seceqn,nameyear,dvips]{arximspdf}

%

\doi{10.1214/12-AOP746} 
\volume{41}
\issue{1}
\pubyear{2013}
\firstpage{262}
\lastpage{293}

\makeatletter

\newcommand{\iint}{\int\!\!\int}

\newcommand{\cal}{\mathcal}

\newcommand{\Var}{\operatorname{Var}}

\newtheorem{theorem}{Theorem}[section]
\newtheorem{prop}{Proposition}[section]
\newtheorem{lemma}{Lemma}[section]
\newtheorem{coro}{Corollary}[section]

\newproclaim{remark}{Remark}[section]

\newcommand{\one}{{\mathbf1}}

\newcommand{\tL}{\tilde{L}}
\newcommand{\cL}{{\cal L}}

\newcommand{\hatK}{\hat{K}}

\newcommand{\D}{D}

\makeatother

\begin{document}
\begin{frontmatter}

\title{From Stein identities to moderate deviations}
\runtitle{Moderate deviations}

\begin{aug}
\author[A]{\fnms{Louis H. Y.} \snm{Chen}\thanksref{t1}\ead[label=e1]{matchyl@nus.edu.sg}},
\author[B]{\fnms{Xiao} \snm{Fang}\thanksref{t1}\ead[label=e2]{fangwd2003@gmail.com}}
\and
\author[C]{\fnms{Qi-Man} \snm{Shao}\corref{}\thanksref{t2}\ead[label=e3]{maqmshao@ust.hk}}
\runauthor{L. H. Y. Chen, X. Fang and Q.-M. Shao}
\affiliation{National University of Singapore,
National University of Singapore and
Hong Kong University of Science and Technology}
\address[A]{L. H. Y. Chen\\
Department of Mathematics\\
National University of Singapore\\
10 Lower Kent Ridge Road\\
Singapore 119076\\
Republic of Singapore\\
\printead{e1}}
\address[B]{X. Fang\\
Department of Statistics\\
\quad and Applied Probability\\
National University of Singapore\\
6 Science Drive 2\\
Singapore 117546 \\
Republic of Singapore\\
\printead{e2}}
\address[C]{Q.-M. Shao\\
Department of Mathematics\\
Hong Kong University of Science and Technology\\
Clear Water Bay, Kowloon, Hong Kong\\
China\\
\printead{e3}\\
and\\
Department of Statistics\\
Chinese University of Hong Kong\\
Shatin, N. T., Hong Kong\\
China\\
(after September 1, 2012)} 
\end{aug}

\thankstext{t1}{Supported in part by Grant C-389-000-010-101
from the National University of Singapore.}

\thankstext{t2}{Supported in part by Hong Kong RGC CERG---602608 and 603710.}

\received{\smonth{11} \syear{2009}}
\revised{\smonth{2} \syear{2012}}

%
\begin{abstract}
Stein's method is applied to obtain a general Cram\'er-type moderate
deviation result for dependent random variables whose dependence is
defined in terms of a Stein identity. A corollary for zero-bias
coupling is deduced. The result is also applied to a combinatorial
central limit theorem, a general system of binary codes, the anti-voter
model on a complete graph, and the Curie--Weiss model. A general
moderate deviation result for independent random variables is also
proved.
\end{abstract}

%
\begin{keyword}[class=AMS]
\kwd[Primary ]{60F10}
\kwd[; secondary ]{60F05}
\end{keyword}
\begin{keyword}
\kwd{Stein's method}
\kwd{Stein identity}
\kwd{moderate deviations}
\kwd{Berry--Esseen bounds}
\kwd{zero-bias coupling}
\kwd{exchangeable pairs}
\kwd{dependent random variables}
\kwd{combinatorial central limit theorem}
\kwd{general system of binary codes}
\kwd{anti-voter model}
\kwd{Curie--Weiss model}
\end{keyword}

\end{frontmatter}

\section{Introduction}\label{sec1}

Moderate deviations date back to
\citet{Cr38} who obtained expansions for tail probabilities for sums
of independent
random variables about the normal distribution. For independent and
identically distributed random
variables $X_1, \ldots, X_n$ with $EX_i=0$ and $\Var(X_i)= 1$ such
that $Ee^{t_0|X_1|}\le c < \infty$ for some $t_0>0$, it follows from
Petrov [(\citeyear{Pet75}), Chapter 8, equation (2.41)] that
%
\begin{equation}
\label{00}
\frac{P(W_n>x)}{1-\Phi(x)}=1+O(1) \bigl(1+x^3\bigr)/\sqrt{n}
\end{equation}
for $0\le x\le a_0 n^{1/6}$, where $W_n=(X_1+\cdots+X_n)/ \sqrt{n}$
and $\Phi$ is the standard normal distribution function, $a_0>0$
depends on $c$ and $t_0$ and $O(1)$ is bounded by a constant depending
on $c$ and $t_0$. 
The range $0\le x\le a_0 n^{1/6}$ and the order of the error term $O(1)
(1+x^3)/\sqrt{n}$ are optimal.

The proof of (\ref{00}) depends on the conjugate method and a
Berry--Esseen bound, while the classical proof of
Berry--Esseen bound for independent random variables uses the Fourier transform.
However, for dependent random \mbox{variables}, Stein's method performs much better
than the method of Fourier transform. Stein's method was introduced by
Charles Stein in 1972 and further
developed by him in 1986. Extensive applications of Stein's method to
obtain Berry--Esseen-type bounds
for dependent random variables can be found in, for example,
\citet{Dia77}, \citet{BRS89}, \citet{Bar90},
\citet{DR96}, \citet{GR97},
\citet{CS04}, \citet{Cha08} and \citet{NP09}.
Recent applications to concentration of measures and large deviations
can be found in, for example, \citet{Cha07} and \citet{ChD10}.
Expositions of Stein's method and its applications in normal and other
distributional
approximations can be found in \citet{DH04}
and \citet{CB05}.

In this paper we apply Stein's method to obtain a
Cram\'er-type moderate deviation result for dependent random variables
whose dependence is
defined in terms of an identity, called Stein identity, which plays a
central role in Stein's method. A corollary for zero-bias coupling is deduced.
The result is then applied to a combinatorial central limit theorem,
the anti-voter model, a general system of binary codes
and the Curie--Weiss model. The bounds obtained in these examples are
as in (\ref{00}) and therefore
may be optimal (see Remark~\ref{r71}). It is noted that
\citet{Raic07}
also used Stein's method
to obtain moderate deviation results for dependent random variables.
However, the dependence structure he
considered is
related to local dependence and is of a different nature from what we
assume through the Stein identity.

This paper is organized as follows. Section~\ref{sec2} is devoted to a
description of
Stein's method and to the construction of Stein identities using
zero-bias coupling
and exchangeable pairs. Section~\ref{sec3} presents a general Cram\'er-type
moderate deviation result and a corollary for zero-bias coupling.
The result is applied to the four examples mentioned above in Section
\ref{sec4}. Although the general
Cram\'er-type moderate deviation result cannot be applied directly to
unbounded independent random variables, the
proof of the general result can be adapted to prove (\ref{00}) under less
stringent
conditions, thereby extending a result of \citet{Lin61}. These are also
presented in Section~\ref{sec4}.
The rest of the paper is devoted to proofs.

\section{Stein's method and Stein's identity}\label{sec2}

Let $W$ be the random variable of interest and $Z$ be another random variable.
In approximating $\cL(W)$ by $\cL(Z)$ using Stein's method,
the difference between $Eh(W)$ and $Eh(Z)$ for a class of functions $h$
is expressed as
%
\begin{equation}\label{01}
Eh(W) - Eh(Z) = E\bigl\{Lf_h(W)\bigr\},
\end{equation}
where $ L$ is a linear operator and $f_h$ a bounded solution of the equation
$Lf = h - Eh(Z)$. It is known that for $N(0,1)$, $Lf(w) = f'(w) -
wf(w)$ [see \citet{Ste72}]
and for Poisson($\lambda$), $Lf(w) = \lambda f(w + 1) - w f(w)$; see
\citet{Chen75}.
However, $L$ is not unique. For example, for normal approximation $L$
can also be the generator of
the Ornstein--Uhlenbeck process, and for Poisson approximation $L$, the
generator of an immigration-death process.
The solution $f_h$ will then be expressed in terms of a Markov process.
This generator approach to Stein's method is due to
Barbour (\citeyear{Bar88}, \citeyear{Bar90}).

By (\ref{01}), bounding $Eh(W) - Eh(Z)$ is equivalent to bounding
$E\{Lf_h(W)\}$. To bound the latter one finds another operator
$\tilde{L}$ such that $E\{\tilde{L} f(W)\} = 0$,
for a class of functions $f$ including $f_h$,
and
write $\tL=L-R$ for a suitable operator $R$. The error term $E\{
Lf_h(W)\}$ is then expressed as $E \{R f_h(W)\}$.
The equation
%
\begin{equation}\label{02}
E\bigl\{ \tL f(W)\bigr\} = 0
\end{equation}
for a class of functions $f$ including $f_h$, is called a Stein
identity for $\cL(W)$. For normal approximation, there are four methods
for constructing a Stein identity: the direct method
[\citet{Ste72}], zero-bias coupling [\citet{GR97} and
\citet{Gol05}], exchangeable pairs [\citet{Ste86}] and Stein
coupling [\citet{CR09}]. We discuss below the construction
of Stein identities using zero-bias coupling and exchangeable pairs. As
proved in \citet{GR97}, for $W$ with $EW=0$ and $\operatorname{Var}(W)=1$,
there always exists $W^*$ such that
%
\begin{equation}\label{04}
E\bigl(Wf(W)\bigr) = Ef'\bigl(W^*\bigr)
\end{equation}
for all bounded absolutely continuous $f$ with bounded derivative $f'$.
The distribution of $W^*$ is called $W$-zero-biased.
If $W$ and $W^*$ are defined on the same probability space (zero-bias
coupling), we may write
$\Delta= W^* - W$. Then by (\ref{04}), we obtain the Stein identity
%
\begin{equation}\label{05}
E\bigl(Wf(W)\bigr) = Ef'(W + \Delta) = E \int_{-\infty}^\infty
f'(W+t) \,d \mu(t|W),
\end{equation}
where $\mu(\cdot|W)$ is the conditional distribution of $\Delta$
given $W$. Here
$\tL(w) = \int_{-\infty}^\infty f'(w+ t) \,d \mu(t |W=w) - w
f(w)$.\vspace*{1pt}

The method of exchangeable pairs [\citet{Ste86}] consists of
constructing $W'$ such that
$(W, W')$ is exchangeable. Then for any anti-symmetric function
$F(\cdot, \cdot)$, that is,
$F(w, w') = - F(w', w)$,
\[
EF\bigl(W, W'\bigr) = 0,
\]
if the expectation exists. Suppose that there exist a constant $\lambda
$ $(0 < \lambda< 1)$ and a random variable $R$
such that
%
\begin{equation}\label{ex-1}
E\bigl(W- W' | W\bigr) = \lambda\bigl( W - E(R|W)\bigr).
\end{equation}
Then for all $f$,
\[
E\bigl\{ \bigl(W- W'\bigr) \bigl(f(W) + f\bigl(W'
\bigr)\bigr)\bigr\} =0,
\]
provided the expectation exists.
This gives the Stein identity
%
\begin{eqnarray}\label{ex-2}
E\bigl(Wf(W)\bigr) & =& -\frac{ 1 }{2 \lambda} E\bigl\{ \bigl(W- W'
\bigr) \bigl(f\bigl(W'\bigr) - f(W)\bigr)\bigr\} + E\bigl(Rf(W)\bigr)
\nonumber\\[-8pt]\\[-8pt]
& =& E \int_{-\infty}^\infty f'(W+t)
\hat{K}(t) \,dt + E\bigl(R f(W)\bigr)\nonumber
\end{eqnarray}
for all absolutely continuous functions $f$ for which expectations
exist, where
$\hatK(t) = { 1 \over2 \lambda} \Delta(I(0 \leq t \leq\Delta) -
I(\Delta\leq t < 0))$ and
$\Delta= W'-W$. In this case, $\tL(w) =
\int_{-\infty}^\infty f'(w+t) E(\hat{K}(t) |W=w) \,dt + E(R|W=w) f(w)
- w f(w)$.

Both Stein identities (\ref{05}) and (\ref{ex-2}) are special cases of
%
\begin{equation}\label{c0}
E\bigl(Wf(W)\bigr) = E \int_{-\infty}^\infty
f'(W+t) \,d \hat{\mu} (t) + E\bigl(Rf(W)\bigr),
\end{equation}
where $\hat{\mu}$ is a random measure. We will prove a moderate
deviation result by assuming
that $W$ satisfies the Stein identity (\ref{c0}).

\section{A Cram\'er-type moderate deviation theorem}\label{sec3}

Let $W$ be a random variable of interest. Assume
that there exist a deterministic positive constant $\delta$, a random
positive measure $\hat{\mu}$ with support $[-\delta, \delta]$
and a random variable $R$ such that
%
\begin{equation} \label{c1}
E\bigl(Wf(W)\bigr) = E \int_{|t| \leq\delta} f'(W+t) \,d
\hat{\mu} (t) + E\bigl(Rf(W)\bigr)
\end{equation}
for all absolutely continuous function $f$ for which the expectation of
either side exists.
Let
%
\begin{equation}\label{1-1}
\D= \int_{|t|\leq\delta} d \hat{\mu} (t). 
\end{equation}

\begin{theorem}\label{t1}
Suppose that there exist constants
$\delta_1, \delta_2$ and $\theta\geq1$ such that
%
\begin{eqnarray} \label{c2}
\bigl|E(D|W) -1\bigr| &\leq&\delta_1 \bigl(1+ |W|\bigr),
\\
\label{c3} \bigl|E(R|W)\bigr| &\leq&\delta_2 \bigl( 1+ |W|\bigr)
\quad\mbox{or}\nonumber\\[-8pt]\\[-8pt]
\bigl|E(R|W)\bigr| &\leq&
\delta_2 \bigl( 1+ W^2\bigr) \quad\mbox{and}\quad
\delta_2 |W|\le\alpha<1\nonumber
\end{eqnarray}
and
%
\begin{equation}\label{d2-0}
E(D |W) \leq\theta.
\end{equation}
Then
%
\begin{equation}\label{t1a}
\frac{ P(W > x) }{1-\Phi(x)} = 1+ O_\alpha(1) \theta^3
\bigl(1+x^3\bigr) (\delta+\delta_1+\delta_2)
\end{equation}
%
for $0 \leq x \leq\theta^{-1} \min( \delta^{-1/3}, \delta_1^{-1/3},
\delta_2^{-1/3})$, where $O_\alpha(1)$ denotes a quantity whose absolute
value is bounded by a universal constant which depends on $\alpha$
only under the second alternative of (\ref{c3}).
\end{theorem}

\begin{remark}
Theorem~\ref{t1} is intended for bounded random variables but with
very general dependence assumptions. For this reason, the support of
the random measure $\hat{\mu}$ is assumed to be within $[-\delta,
\delta]$ where $\delta$ is typically of the order of $1/\sqrt{n}$
due to standardization. In order for the normal approximation to work,
$E(D|W)$ should be close to $1$ and $E(R|W)$ small. This is reflected
in $\delta_1$ and $\delta_2$ which are assumed to be small.
\end{remark}

For zero-bias coupling, $D=1$ and $R=0$, so conditions (\ref{c2}),
(\ref{c3}) and (\ref{d2-0}) are satisfied with
$\delta_1 = \delta_2 =0$ and $\theta=1$.
Therefore, we have:

\begin{coro} \label{c21}
Let $W$ and $W^*$ be defined on the same probability space satisfying
(\ref{04}). Assume that
$EW=0$, $EW^2=1$ and $|W- W^*| \leq\delta$ for some constant $\delta
$. Then
\[
\frac{ P(W \geq x) }{1-\Phi(x)} = 1+ O(1) \bigl(1+x^3\bigr) \delta
\]
for $0 \leq x \leq\delta^{-1/3}$.
\end{coro}

\begin{remark} \label{r2}
For an exchangeable pair $(W, W')$ satisfying (\ref{ex-1}) and
$|\Delta| \leq\delta$, (\ref{c1}) is satisfied with
$D = \Delta^2 / (2 \lambda)$.
\end{remark}

\begin{remark} \label{r5}
Although one cannot apply Theorem~\ref{t1} directly to unbounded
random variables, one can adapt the
proof of Theorem~\ref{t1} to give a proof of (\ref{00}) for independent
random variables assuming the
existence of the moment generating functions of $|X_i|^{1/2}$ thereby
extending a result of
\citet{Lin61}. This result is given in Proposition~\ref{t72}. The
proof also suggests the possibility of extending Theorem~\ref{t1} to
the case where the support of $\hat{\mu}$ may not be bounded.
\end{remark}

\section{Applications}\label{sec4}

In this section we apply Theorem~\ref{t1} to
four cases of dependent random variables, namely,
a combinatorial central limit theorem, the anti-voter model on a
complete graph, a general system of binary codes, and
the Curie--Weiss model. The proofs of
the results for the third and the fourth example will be given in the
last section. At the end of this section, we will
present a moderate deviation result for sums of independent random
variables and the proof will also be given in the
last section.

\subsection{Combinatorial central limit theorem}\label{sec4.1}

Let $\{a_{ij}\}_{i,j=1}^n$ be an array of real numbers satisfying
$\sum_{j=1}^n a_{ij}=0$ for all $i$
and $\sum_{i=1}^n a_{ij}=0$ for all $j$. Set $c_0=\max_{i,j} |a_{ij}|$
and $W=\sum_{i=1}^n a_{i\pi(i)}/\sigma$, where $\pi$ is a uniform
random permutation of $\{1,2,\ldots,n\}$ and $\sigma^2=E
(\sum_{i=1}^n a_{i\pi(i)})^2$. In \citet{Gol05} 
$W$ is coupled with the zero-biased $W^*$ in such a way that
$|\Delta| = |W^*-W|\leq8c_0/\sigma$.
Therefore, by Corollary~\ref{c21} with $\delta= 8c_0/\sigma$, we have
%
\begin{equation}\label{010}
\frac{ P(W \geq x) }{1- \Phi(x)} = 1 + O(1) \bigl(1+x^3\bigr) c_0 /
\sigma
\end{equation}
for $ 0 \leq x \leq(\sigma/c_0)^{1/3}$.

\subsection{Anti-voter model on a complete graph}\label{sec4.2}

Consider the anti-voter model on a complete graph with $n$ vertices,
$1, \ldots, n$ and $(n-1)n/2$ edges. Let $X_i$ be a random variable
taking value
$1$ or $-1$ at the vertex $i$, $i =1, \ldots, n$.

Let $X=(X_1,\ldots, X_n)$, where $X_i$ takes values $1$ or $-1$.
The anti-voter model in discrete time is described as the following
Markov chain:
in each step, uniformly pick a vertex $I$ and an edge connecting
it to $J$, and then change $X_I$ to $-X_J$. Let $U=\sum_{i=1}^n X_i$
and $W=U/\sigma$, where $\sigma^2=\Var(U)$. Let $W'= (U - X_I -X_J)
/\sigma$, where $I$ is uniformly distributed on $\{1, 2,\ldots, n\}$
independent of other random variables. Consider the case where the
distribution of $X$ is the stationary distribution. Then as shown in
\citet{RR97}, $(W, W')$ is an exchangeable pair and
%
\begin{equation}\label{anti-1}
E\bigl(W-W'|W\bigr)=\frac{2}{n}W.
\end{equation}
According to (\ref{ex-2}), (\ref{c1}) is satisfied with $\delta= 2
/\sigma$ and $R=0$. To check conditions (\ref{c2}) and (\ref{d2-0}),
let $T$
denote the number of $1$'s among $X_1,\ldots,X_n$, $a$ be the number
of edges connecting two $1$'s, $b$ be the number of edges connecting
two $-1$'s and $c$ be the number of edges connecting $1$ and $-1$.
Since it is a complete graph, $a=\frac{T(T-1)}{2}$,
$b=\frac{(n-T)(n-T-1)}{2}$. Therefore [see, e.g., \citet{RR97}]
%
\begin{eqnarray}
\label{anti-2}
E\bigl[\bigl(W-W'\bigr)^2|X\bigr] & =&
\frac{1}{\sigma^2}E\bigl[\bigl(U'-U\bigr)^2|X\bigr]=
\frac{4}{\sigma^2}\frac{2a+2b}{n(n-1)}
\nonumber\\[-8pt]\\[-8pt]
&=&\frac{1}{\sigma^2}\frac{2U^2+2n^2-4n}{n(n-1)}=\frac{2\sigma
^2W^2+2n^2-4n}{\sigma^2n(n-1)},\nonumber
\\
\label{anti-3}
E(D|W)-1 & = & \frac{n}{4}E\bigl(\bigl(W'-W
\bigr)^2|W\bigr)-1
\nonumber\\[-8pt]\\[-8pt]
& = & \frac{W^2}{2(n-1)}-\frac{2\sigma^2(n-1)-(n^2-2n)}{2\sigma
^2(n-1)}.\nonumber
\end{eqnarray}
Noting that $E(E(D|W)-1)=0$ and $EW^2=1$, we have
$\sigma^2=\frac{n^2-2n}{2n-3}$. Hence
%
\begin{equation}\label{anti-4}
E(D|W)-1=\frac{W^2}{2(n-1)}-\frac{1}{2(n-1)},
\end{equation}
which means that (\ref{c2}) is satisfied with
$\delta_1=O(n^{-1/2})$. Thus, we have the following moderate deviation result.

\begin{prop}\label{p22}
We have
\[
\frac{ P(W\geq x) }{1- \Phi(x)} = 1 + O(1) \bigl(1+x^3\bigr)/\sqrt{n}
\]
for $0 \leq x \leq n^{1/6}$.
\end{prop}

\subsection{A general system of binary codes}\label{sec4.3}

In \citet{CHSZ11}, a general system of binary codes
is defined as follows.
Suppose each nonnegative integer $x$ is coded by a binary string
consisting of $0$'s and $1$'s.
Let $\tilde{S}(x)$ denote the number of $1$'s in the resulting coding
string of $x$, and let
%
\begin{equation}
\tilde{\mathbf S}=\bigl(\tilde{S}(0), \tilde{S}(1), \ldots\bigr).
\end{equation}
For each nonnegative integer $n$, define $\tilde{S}_n=\tilde{S}(X)$,
where $X$ is a random integer uniformly
distributed over the set $\{0,1,\ldots,n\}$. The general system of
binary codes introduced by \citet{CHSZ11} is one in which
%
\begin{equation}
\label{p21cond} \tilde{S}_{2m-1}=\tilde{S}_{m-1}+\mathcal{I}
\qquad\mbox{in distribution}\qquad \mbox{for all } m\ge1,
\end{equation}
where $\mathcal{I}$ is an independent $\operatorname{Bernoulli}(1/2)$ random variable.
\citet{CHSZ11} proved the asymptotic normality of
$\tilde{S}_n$. Here, we apply Theorem~\ref{t1} to obtain the
following Cram\'er moderate deviation result. For $n\ge1$, let integer
$k$ be such that $2^{k-1}-1<n\le2^k-1$, and let $\tilde{W}_n=(\tilde
{S}_n-k/2)/\sqrt{k/4}$.
%
\begin{prop} \label{p21}
Under the assumption (\ref{p21cond}), we have
%
\begin{equation}\label{p21a}
\frac{ P( \tilde{W}_n \geq x) }{1- \Phi(x)} = 1 + O(1) \bigl(1+x^3\bigr
) \frac{1}{\sqrt{k}}
\end{equation}
%
for $0 \leq x \leq k^{1/6}$.
\end{prop}
As an example of this system of binary codes, we consider the binary
expansion of a random integer $X$
uniformly distributed over $\{0,1,\ldots,n\}$. For $2^{k-1}-1<n\le
2^k-1$, write $X$ as
\[
X= \sum_{i=1}^k X_i
2^{k-i},
\]
and let $S_n=X_1 + \cdots + X_k$. Set $W_n=(S_n-k/2)/\sqrt{k/4}$. It is
easy to verify that
$S_n$ satisfies (\ref{p21cond}). A Berry--Esseen bound for $W_n$ was
first obtained by
\citet{Dia77}. 
Proposition~\ref{p21} provides a Cram\'er moderate deviation result
for $W_n$.
Other examples of this system of binary codes include the binary
reflected Gray code and a coding system using
translation and complementation. Detailed descriptions of these codes
are given in \citet{CHSZ11}.

\subsection{Curie--Weiss model}\label{sec4.4}

Consider the Curie--Weiss model for $n$ spins
$\Sigma=(\sigma_1,\sigma_2,\ldots,\sigma_n)\in\{-1,1\}^n$.
The joint distribution of $\Sigma$ is given by
\[
Z_{\beta,h}^{-1} \exp\Biggl(\frac{\beta}{n} \sum
_{1\le i<j \le n} \sigma_i \sigma_j +\beta h
\sum_{i=1}^n \sigma_i
\Biggr),
\]
where $Z_{\beta,h}$ is the normalizing
constant, and $\beta>0, h\in\mathbb{R}$ are called the inverse of
temperature and the external field,
respectively. We are interested in the total magnetization $S=\sum
_{i=1}^n \sigma_i$.
We divide the region $\beta>0, h\in\mathbb{R}$ into three parts, and
for each part, we list
the concentration property
and the limiting distribution of $S$ under proper standardization.
Consider the solution(s) to the equation
%
\begin{equation}
\label{CW1} m=\tanh\bigl(\beta(m+h)\bigr).
\end{equation}

\begin{longlist}[\textit{Case} 1.]
\item[\textit{Case} 1.] $0<\beta<1, h\in\mathbb{R}$ or
$\beta\ge1, h\ne0$. There is a unique solution $m_0$ to
(\ref{CW1}) such that $m_0 h \ge0 $. In this case, $S/n$ is
concentrated around $m_0$ and has a Gaussian limit under proper standardization.

\item[\textit{Case} 2.] $\beta>1, h=0$. There are two
nonzero solutions to (\ref{CW1}),
$m_1<0<m_2$, where $m_1 = - m_2$. Given condition on $S< 0$ ($S>0$,
resp.), $S/n$ is concentrated around $m_1$ ($m_2$, resp.) and has a
Gaussian limit under proper standardization.

\item[\textit{Case} 3.] $\beta=1, h=0$. $S/n$ is
concentrated around $0$, but the limit distribution is not Gaussian.
\end{longlist}

We refer to \citet{E85} for the concentration of measure results,
Ellis and Newman (\citeyear{EN78}, \citeyear{EN782}) for the results on
limiting distributions. See also \citet{ChS11} for a
Berry--Esseen-type bound when the limiting distribution is not
Gaussian. Here we focus on the Gaussian case and prove the following
two Cram\'er moderate deviation results for cases 1 and 2.

\begin{prop} \label{p4}
In case 1, define
%
\begin{equation}
W=\frac{S-n m_0}{\sigma},
\end{equation}
where
%
\begin{equation}
\sigma^2=\frac{ n( 1-m_0^2)}{1-(1-m_0^2)\beta}.\vadjust{\goodbreak}
\end{equation}
Then we have
%
\begin{equation}\label{p4a}
\frac{P( W \geq x) }{1- \Phi(x)} = 1 + O(1) \bigl(1+x^3\bigr)/ \sqrt{n}
\end{equation}
%
for $ 0 \leq x \leq n^{1/6}$.
\end{prop}

\begin{prop} \label{p5}
In case 2, define
%
\begin{equation}
W_1=\frac{S-n m_{1}}{\sigma_1},\qquad W_2=\frac{S-n m_{2}}{\sigma_2},
\end{equation}
where
%
\begin{equation}
\sigma_1^2=\frac{n( 1-m_1^2) }{1-(1-m_{1}^2)\beta},\qquad \sigma_2^2=
\frac{n( 1-m_2^2) }{1-(1-m_{2}^2)\beta}.
\end{equation}
Then we have
%
\begin{equation}\label{p5a}
\frac{P( W_1 \geq x|S<0)}{1- \Phi(x)} = 1 + O(1) \bigl(1+x^3\bigr)/
\sqrt{n}
\end{equation}
%
and
%
\begin{equation}\label{p5b}
\frac{P( W_2 \geq x|S>0)}{1- \Phi(x)} = 1 + O(1) \bigl(1+x^3\bigr)/
\sqrt{n}
\end{equation}
for $ 0 \leq x \leq n^{1/6}$.
\end{prop}


\subsection{Independent random variables}\label{sec4.5}

Moderate deviation for independent random variables has been extensively
studied in literature [see, e.g., \citet{Pet75}, Chapter 8] based on the
conjugated method. Here, we will adapt the proof of Theorem~\ref{t1}
to prove the following moderate deviation result, which is a variant of
those in the literature [see again \citet{Pet75}, Chapter~8].

\begin{prop}\label{t71}
Let $\xi_i, 1 \leq i \leq n$ be independent random variables with
$E\xi_i=0$ and $Ee^{t_n |\xi_i|} < \infty$ for some $t_n$ and for
each $1 \leq i \leq n$. Assume that
%
\begin{equation}\label{t71-c}
\sum_{i=1}^n E\xi_i^2
=1.
\end{equation}
Then
%
\begin{equation}\label{t71a}
\frac{P( W \geq x)}{1 - \Phi(x)} = 1 + O(1) \bigl(1+ x^3\bigr) \gamma
e^{4 x^3
\gamma} 
\end{equation}
for $ 0 \leq x \leq t_n$, where $\gamma= \sum_{i=1}^n E|\xi_i|^3
e^{x |\xi_i|} $.
\end{prop}

%
%
%

We deduce (\ref{00}) under less stringent conditions from Proposition
\ref{t71} and extend a result of \citet{Lin61} to independent but not
necessarily identically distributed random variables.\vadjust{\goodbreak}

\begin{prop} \label{t72}
Let
$X_i, 1 \leq i \leq n $ be a sequence of independent random variables with
$EX_i=0$. Put $S_n = \sum_{i=1}^n X_i$ and $B_n^2 = \sum_{i=1}^n
EX_i^2$. Assume that
there exists positive constants $c_1, c_2$ and $t_0 $ such that
%
\begin{equation}\label{t72-c}
B_n^2 \geq c_1^2 n,\qquad
Ee^{t_0 \sqrt{|X_i|}} \leq c_2 \qquad\mbox{for } 1 \leq i \leq n.
\end{equation}
Then
%
\begin{equation}\label{t72a}
\frac{ P( S_n / B_n \geq x)}{1- \Phi(x)} = 1 + O(1) \bigl( 1+x^3\bigr)
/\sqrt{n}
\end{equation}
for $ 0 \leq x \leq(c_1 t_0^2 )^{1/3} n^{1/6}$, where $O (1)$ is
bounded by a constant depending on $c_2$ and $c_1 t_0^2 $. In
particular, we have
%
\begin{equation} \label{t72b}
\frac{P( S_n / B_n \geq x)}{1- \Phi(x)} \to1
\end{equation}
uniformly in $ 0 \leq x \leq o(n^{1/6})$.
\end{prop}
\begin{pf}
The main idea is first
truncating $X_i$ and then applying Proposition~\ref{t71} to the
truncated sequence. 
Let
\[
\tau_n = \bigl( c_1^2 t_0 n
\bigr)^{1/3}2^{-2/3},\qquad \bar{X}_i = X_i
\one\bigl( |X_i| \leq\tau_n^2\bigr),\qquad
\bar{S}_n = \sum_{i=1}^n
\bar{X}_i.
\]
Observe that
\begin{eqnarray*}
&&\bigl|P( S_n / B_n \geq x) - P( \bar{S}_n /
B_n \geq x)\bigr|
\\
&&\qquad\leq\sum_{i=1}^n P
\bigl(|X_i| \geq\tau_n^2 \bigr)
\\
&&\qquad\leq\sum_{i=1}^n e^{-t_0 \tau_n} E
e^{t_0 \sqrt{|X_i|}}
\leq c_2 n e^{-t_0 \tau_n}\\
&&\qquad= O(1) \bigl(1- \Phi(x)\bigr) \bigl(
1+ x^3\bigr) / \sqrt{n}
\end{eqnarray*}
for $ 0 \leq x \leq(c_1 t_0^2)^{1/3} n^{1/6} $; here we used the fact that
\[
t_0 \tau_n =\bigl(c_1
t_0^2\bigr)^{2/3} n^{1/3}
2^{-2/3}.
\]
Now let $\xi_i = ( \bar{X}_i - E\bar{X}_i) / \bar{B}_n
$,
where $\bar{B}_n^2 = \sum_{i=1}^n \operatorname{Var}(\bar{X}_i)$.
It is easy to see that
%
\begin{eqnarray} \label{t72-1}
\sum_{i=1}^n |E\bar{X}_i |
& \le& \sum_{i=1}^n E|X_i|
\one\bigl( |X_i| \geq\tau_n^2\bigr)
\nonumber
\\
& \leq& \sum_{i=1}^n
\sup_{s \geq\tau_n} \bigl(s^2 e^{-t_0 s}\bigr)
Ee^{t_0
\sqrt{|X_i|}}
\\
& \leq& c_2 n c_1 \bigl(c_1
t_0^2\bigr)^{-1} \sup_{s \geq t_0 \tau_n}
\bigl(s^2 e^{-s}\bigr) = c_1 o\bigl(
n^{-2}\bigr)\nonumber
\end{eqnarray}
and similarly, $\bar{B}_n = B_n( 1+ o(n^{-2}))$. Thus, for
$ 0 \leq x \leq(c_1 t_0^2)^{1/3} n^{1/6}$
\begin{eqnarray*}
x |\xi_i| &\leq&\frac{2^{1/3} x}{c_1 n^{1/2} } |X_i|\one\bigl(
|X_i|\leq\tau_n^2 \bigr) + o(1) \leq
\frac{ 2^{1/3} x \tau_n }{ c_1 n^{1/2}} \sqrt{|X_i|} + o(1) \\
&\leq&\frac
{t_0}{2^{1/3}}
\sqrt{|X_i|} + o(1)
\end{eqnarray*}
and hence $\gamma= O(n^{-1/2})$. Applying Proposition~\ref{t71}
to $\{\xi_i, 1 \leq i \leq n \}$ gives (\ref{t72a}).
\end{pf}
\begin{remark}\label{r71}
As stated previously for (\ref{00}) in the \hyperref[sec1]{Introduction}, the range $0
\le
x \le(c_1 t_0^2)^{1/3} n^{1/6} $ and the order of the error term
$O(1)(1 + x^3)/\sqrt{n}$ in Proposition~\ref{t72} are optimal. By
comparing with (\ref{00}) the results in the four examples discussed
above may be optimal.
\end{remark}

\section{Preliminary lemmas}\label{sec5}

To prove Theorem~\ref{t1}, we first need to develop two preliminary
lemmas. Our
first lemma gives a bound for the moment generating function of $W$.

\begin{lemma} \label{l21}
Let $W$ be a random variable with $E|W| \le C$. Assume that there exist
$\delta>0$, $\delta_1 \geq0, 0 \leq
\delta_2 \leq1/4$ and $\theta\geq1$
such that (\ref{c1}) and (\ref{c2})--(\ref{d2-0}) are satisfied.
Then for all $0<t \le1/(2\delta)$ satisfying
%
\begin{equation}
\label{l21b} t\delta_1 +C_\alpha t\theta
\delta_2 \leq1/2,
\end{equation}
where
%
\begin{equation}
\label{Calpha} C_\alpha= \cases{ 12, &\quad under the first alternative
of (\ref{c3}),
\vspace*{2pt}\cr
\dfrac{2(3+\alpha)}{1-\alpha}, &\quad under the second alternative of (\ref{c3}),}
\end{equation}
we have
%
\begin{equation}\label{l21a}
Ee^{tW} \leq\exp\bigl(t^2/2 + c_0(t)\bigr),
\end{equation}
where
%
\begin{equation}\label{l21c}
c_0(t) = c_1(C, C_\alpha) \theta\bigl\{
\delta_2 t + \delta_1 t^2 + (\delta+
\delta_1 + \delta_2) t^3 \bigr\},
\end{equation}
where $c_1(C, C_\alpha)$ is a constant depending only on $C$ and
$C_\alpha$.
\end{lemma}
\begin{pf}
Fix $a>0$, $t \in(0, 1/(2\delta)]$ and $s \in(0,t]$, and let
$f(w)=e^{s(w\wedge a)}$. Letting $h(s)=Ee^{s(W\wedge a)}$,
firstly we prove that $h'(s)$ can be bounded by $s h(s)$ and
$EW^2f(W)$. By (\ref{c1}),
\begin{eqnarray*}
h'(s) & = & E(W\wedge a)e^{s(W\wedge a)} \le E\bigl(Wf(W)\bigr)
\\
& = & E \int f'(W+t) \,d \hat{\mu} (t) + E\bigl(Rf(W)\bigr)
\\
& =& s E \int e^{s(W+t)} I(W+t \le a) \,d \hat{\mu} (t) + E
\bigl(e^{s(W\wedge a)} E(R|W)\bigr)
\\[-2pt]
& \le& s E \int e^{s[(W+t)\wedge a]} \,d \hat{\mu} (t) + E
\bigl(e^{s(W\wedge a)} E(R|W)\bigr)
\\[-2pt]
& \le& s E \int e^{s(W\wedge a + \delta)} \,d \hat{\mu} (t) + E \bigl
(e^{s(W\wedge a)}
E(R|W)\bigr)
\\[-2pt]
& =& s E\int e^{s(W\wedge a)} \,d \hat{\mu} (t) + s E \int e^{s(W\wedge
a)} \bigl(
e^{s \delta} - 1\bigr) \,d \hat{\mu} (t)\\[-2pt]
&&{} + E \bigl(e^{s(W\wedge a)} E(R|W)
\bigr)
\\[-2pt]
& \leq& s Ee^{s(W\wedge a)} D + s E e^{s (W\wedge a)} \bigl|e^{s \delta} -
1\bigr| D +
2 \delta_2 E \bigl(\bigl(1+W^2\bigr)e^{s(W\wedge a)}
\bigr), 
\end{eqnarray*}
%
where we have applied (\ref{1-1}) and (\ref{c3}) to obtain the last
inequality. Now, applying the
simple inequality
\[
\bigl|e^x - 1\bigr| \le2 |x| \qquad\mbox{for $|x| \le1$},
\]
%
and then (\ref{c2}), we find that
\begin{eqnarray*}
E\bigl(Wf(W)\bigr) &\le& s Ee^{s(W\wedge a)} D + s E e^{s (W\wedge a)}
2 s \delta
D+ 2\delta_2 E \bigl(\bigl(1+ W^2\bigr) e^{s(W\wedge a)}
\bigr)
\\[-2pt]
&\leq& s Ee^{s(W\wedge a)} E(D|W) + 2 s^2 \theta\delta
Ee^{s(W\wedge a)} + 2\delta_2 E \bigl(\bigl(1+W^2\bigr)
e^{s(W\wedge a)} \bigr)
\\[-2pt]
& =& s Ee^{s(W\wedge a)} + s E e^{s(W\wedge a)} \bigl[E(D|W) -1\bigr]
\\[-2pt]
&&{} + 2 s^2 \theta\delta Ee^{s(W\wedge a)} + 2\delta_2 E
\bigl(\bigl(1+W^2\bigr) e^{s(W\wedge a)} \bigr)
\\[-2pt]
& \leq& s Ee^{s(W\wedge a)} + s \delta_1 Ee^{s(W\wedge a)} \bigl(1+
|W|\bigr) + 2 s^2 \theta\delta Ee^{s(W\wedge a)}
\\[-2pt]
&&{} +2 \delta_2 E \bigl(\bigl(1+W^2\bigr)
e^{s(W\wedge a)} \bigr).
\end{eqnarray*}
Note that
%
\begin{eqnarray}
E|W|e^{s(W\wedge a)}&=&EW e^{s(W\wedge a)}+2 EW^- e^{s(W\wedge
a)}
\nonumber\\[-9pt]\\[-9pt]
&\le& E\bigl(Wf(W)\bigr)+ 2E|W|\le2C+E\bigl(Wf(W)\bigr).\nonumber
\end{eqnarray}
Collecting terms, we obtain
%
\begin{eqnarray}
\label{l21-1} &&
h'(s)\le E\bigl(Wf(W)\bigr)
\nonumber\\[-2pt]
&&\qquad\leq\bigl\{ \bigl( s (1+ \delta_1 + 2 t \theta\delta)+ 2
\delta_2\bigr) h(s) + 2 \delta_2 EW^2f(W)+2Cs
\delta_1 \bigr\}\\[-2pt]
&&\qquad\quad{} /(1-s\delta_1).
\nonumber
\end{eqnarray}

Secondly, we show that $EW^2f(W)$ can be bounded by a function of
$h(s)$ and $h'(s)$. Letting $g(w)=we^{s(w\wedge a)}$,
and then
arguing as for (\ref{l21-1}),
%
\begin{eqnarray}
\label{l21-02}
EW^2 f(W) &=& EW g(W)
\nonumber\\
& =& E\int\bigl( e^{s[(W+t)\wedge a]} + s (W+t) e^{s[(W+t)\wedge a]} I(W+t
\le a)\bigr) \,d \hat{\mu} (t) \nonumber\\
&&{}+ E\bigl(RWf(W)\bigr)
\nonumber
\\
& \le& E\int\bigl( e^{s(W\wedge a)} e^{s\delta} + s \bigl[(W+t)\wedge a
\bigr] e^{s(W\wedge a)} e^{s\delta} \bigr) \,d \hat{\mu} (t)
\nonumber\\[-8pt]\\[-8pt]
&&{} + E\bigl(RWf(W)
\bigr)
\nonumber
\\
& =& e^{s\delta} E\bigl(f(W)+s f(W) \bigl((W\wedge a)+\delta\bigr)\bigr
) D+
E\bigl(RWf(W)\bigr)
\nonumber
\\
& \le& \theta e^{0.5} (1+0.5) Ef(W) + s\theta e^{s\delta} E(W
\wedge a)f(W) +E\bigl(RWf(W)\bigr)
\nonumber
\\
& \le& 1.5 e^ {0.5}\theta h(s)+2s\theta h'(s) +E
\bigl(RWf(W)\bigr).
\nonumber
\end{eqnarray}
Note that under the first alternative of (\ref{c3}),
%
\begin{equation}
\bigl|E\bigl(RWf(W)\bigr)\bigr|\le\delta_2 Ef(W) +2\delta_2
EW^2f(W),
\end{equation}
and under the second alternative of (\ref{c3}),
%
\begin{equation}
\bigl|E\bigl(RWf(W)\bigr)\bigr|\le\alpha E f(W) +\alpha EW^2f(W).
\end{equation}
Thus, recalling $\delta_2\le1/4$ and $\alpha<1$, we have
%
\begin{equation}\label{l21-3}
EW^2f(W) \leq\frac{C_\alpha}{2} \bigl(\theta h(s)+s\theta
h'(s)\bigr),
\end{equation}
where $C_\alpha$ is defined in (\ref{Calpha}).

We are now ready to prove (\ref{l21a}). Substituting (\ref{l21-3}) into
(\ref{l21-1}) yields
%
\begin{eqnarray} \label{l21-4}
(1-s\delta_1) h'(s) & \leq& \bigl( s (1+
\delta_1 + 2 t \theta\delta)+ 2 \delta_2\bigr) h(s)
\nonumber
\\
& &{} + \delta_2 C_\alpha\bigl(\theta h(s) +s \theta
h'(s)\bigr) +2C s\delta_1
\nonumber
\\
& = & \bigl( s (1+ \delta_1 + 2 t \theta\delta)+ 2
\delta_2(1+C_\alpha\theta) \bigr) h(s)
\nonumber\\[-8pt]\\[-8pt]
& &{} + C_\alpha s \theta\delta_2 h'(s)+2C s
\delta_1
\nonumber
\\
& \leq& \bigl( s (1+ \delta_1 + 2 t \theta\delta)+ 2
\delta_2(1+C_\alpha\theta) \bigr) h(s)
\nonumber
\\
& &{} + C_\alpha t \theta\delta_2 h'(s)+2C s
\delta_1.\nonumber
\end{eqnarray}
Solving for $h'(s)$, we obtain
%
\begin{equation} \label{l21-5}
h'(s)\leq\bigl( s c_1(t) + c_2(t)\bigr)
h(s) +\frac{2C s\delta_1}{1-c_3 (t)},
\end{equation}
where
%
\begin{eqnarray*}
c_1(t) & = & \frac{ 1+ \delta_1 + 2 t \theta\delta}{1- c_3(t)}, 
\\
c_2(t) & = & \frac{ 2 \delta_2(1+C_\alpha\theta) }{1- c_3(t)}, 
\\
c_3(t) & = & t\delta_1 +C_\alpha t\theta
\delta_2. 
\end{eqnarray*}
Now taking $t$ to satisfy (\ref{l21b}) yields $c_3(t) \leq1/2$, so in
particular,
$c_i(t)$ is nonnegative for $i=1,2$ and
$1/(1-c_3(t)) \le1+ 2 c_3(t)$.

Solving (\ref{l21-5}), we have
%
\begin{equation}\label{l21-6}
h(s) \le\exp\biggl(\frac{t^2}{2}c_1(t)+t
c_2(t)+2C \delta_1 t^2\biggr).
\end{equation}
%

Note that $ c_3(t) \leq1/2$, $\delta_2 \le1/4$ and $\theta\geq1$.
Elementary calculations now give
%
\begin{eqnarray*}
&&
\frac{t^2}{2}\bigl(c_1(t)-1\bigr)+tc_2(t) +2C
\delta_1 t^2
\\
&&\qquad= \frac{t^2}{2}\frac{\delta_1+2t\theta\delta
+c_3(t)}{1-c_3(t)}+\frac{2 t \delta_2(1+C_\alpha\theta
)}{1-c_3(t)}+2C
\delta_1 t^2
\\
&&\qquad\leq t^2(\delta_1+2t\theta\delta+ t
\delta_1+C_\alpha t\theta\delta_2)+4t
\delta_2(1+C_\alpha)+2C \delta_1
t^2
\\
&&\qquad\leq c_0(t),
\end{eqnarray*}
%
and hence
\[
t^2 c_1(t) /2 + t c_2(t) + 2 C
\delta_1 t^2 \leq t^2/ 2 +
c_0(t),
\]
thus proving (\ref{l21a}) by letting $a\to\infty$.
\end{pf}

\begin{lemma} \label{l22}
Suppose that for some nonnegative $\delta,\delta_1$ and $\delta_2$,
satisfying
$\max(\delta, \delta_1, \delta_2) \leq1$ and $\theta\ge1$,
(\ref{l21a}) is satisfied, with $c_0(t)$ as
in (\ref{l21c}), for all
%
\begin{equation}
\label{MDranget} t \in\bigl[0,\theta^{-1} \min\bigl(
\delta^{-1/3}, \delta_1^{-1/3},\delta_2^{-1/3}
\bigr)\bigr].
\end{equation}
Then for integers $k \ge1$,
%
\begin{equation}
\label{l22a} \int_0^t u^k
e^{u^2/2} P( W \geq u) \,du \leq c_2(C, C_\alpha)
t^k,
\end{equation}
where $c_2(C, C_\alpha)$ is a constant depending only on $C$ and
$C_\alpha$ defined in Lem\-ma~\ref{l21}.
\end{lemma}
\begin{pf}
For $t$ satisfying (\ref{MDranget}), it is easy to see that $c_0(t)
\leq5c_1(C, C_\alpha)$, where $c_1(C, C_\alpha)$ is as in Lemma~\ref{l21},
and (\ref{l21b}) is satisfied.
Write
\begin{eqnarray*}
&&\int_0^t u^k e^{u^2/2} P(
W \geq u) \,du
\\
&&\qquad= \int_0^{[t]} u^k
e^{u^2/2} P( W \geq u) \,du + \int_{[t]}^t
u^k e^{u^2/2} P( W \geq u) \,du,
\end{eqnarray*}
where $[t]$ denotes the integer part of $t$.
For the first integral, noting that $\sup_{j-1 \leq u \leq j}
e^{u^2/2- j u} = e^{(j-1)^2/2 - j(j-1)}$,
we have
%
\begin{eqnarray}\label{l22-1}
&&
\int_0^{[t]} u^k e^{u^2/2}
P(W\geq u)\,du
\nonumber\\
&&\qquad\leq\sum_{j=1}^{[t]}j^k
\int_{j-1}^j e^{u^2/2-ju} e^{ju} P(W
\geq u)\,du
\nonumber
\\
&&\qquad\leq\sum_{j=1}^{[t]}j^k
e^{(j-1)^2/2-j(j-1)}\int_{j-1}^j e^{ju} P(W
\geq u)\,du
\\
&&\qquad\leq2 \sum_{j=1}^{[t]}j^k
e^{-j^2/2}\int_{-\infty}^\infty e^{ju}
P(W\geq u)\,du
\nonumber
\\
&&\qquad= 2 \sum_{j=1}^{[t]}j^k
e^{-j^2/2} (1/j) E e^{jW}
\nonumber
\\
&&\qquad\leq2 \sum
_{j=1}^{[t]}j^{k-1}\exp\bigl(-j^2/2
+ j^2/2 + c_0(j)\bigr)
\nonumber
\\
&&\qquad\leq2 e^{c_0(t)} \sum_{j=1}^{[t]}j^{k-1}
\nonumber
\\
&&\qquad\leq c_2(C,C_\alpha) t^k.\nonumber
\end{eqnarray}
Similarly, we have
\begin{eqnarray*}
&&
\int_{[t]}^tu^ke^{u^2/2}P(W
\geq u)\,du
\\
&&\qquad\leq t^k \int_{[t]}^t
e^{u^2/2-tu}e^{tu}P(W\geq u)\,du
\\
&&\qquad\leq t^k e^{[t]^2/2-t[t]} \int_{[t]}^t
e^{tu} P(W\geq u)\,du
\\
&&\qquad\leq2t^k e^{-t^2/2} \int_{-\infty}^\infty
e^{tu}P(W\geq u)\,du
\\
&&\qquad\leq c_2(C, C_\alpha) t^k.
\end{eqnarray*}
%
This completes the proof.
\end{pf}

\section{Proofs of results}\label{sec6}

In this section, let $O_\alpha(1)$ denote universal constants which
depend on $\alpha$ only under the second alternative of (\ref{c3}).

\subsection{\texorpdfstring{Proof of Theorem \protect\ref{t1}}{Proof of Theorem 3.1}}\label{sec6.1}


If $\theta^{-1} \min( \delta^{-1/3},
\delta_1^{-1/3}, \delta_2^{-1/3}) \leq O_\alpha(1)$, then $1/(1-\Phi
(x)) \leq1/(1-\Phi(O_\alpha(1)))$ for $0 \leq x \leq O_\alpha(1)$.
Moreover, $\theta^3 (\delta+\delta_1+\delta_2)\ge O_\alpha(1)$.
Therefore, (\ref{t1a}) is trivial.
Hence, we can assume
%
\begin{equation}\label{t1-01}
\theta^{-1} \min\bigl( \delta^{-1/3}, \delta_1^{-1/3},
\delta_2^{-1/3}\bigr) \geq O_\alpha(1)
\end{equation}
so that $\delta\le1, \delta_2\le1/4, \delta_1+2\delta_2 <1$, and
moreover, $\delta_1+\delta_2+\alpha<1$ under the second alternative
of (\ref{c3}). Our proof is based on Stein's method. Let $f=f_x$ be
the solution
to the Stein equation
%
\begin{equation}\label{stein2}
wf(w)- f'(w) = I( w\geq x) - \bigl(1-\Phi(x)\bigr).
\end{equation}
It is known that
%
\begin{eqnarray}\label{t11}
f(w) &= & \cases{ %
\sqrt{2\pi} e^{w^2/2} \bigl( 1- \Phi(w)
\bigr)\Phi(x), &\quad $w \geq x$,
\vspace*{1pt}\cr
\sqrt{2\pi} e^{w^2/2} \bigl( 1- \Phi(x)
\bigr)\Phi(w), &\quad $w < x$,}
\nonumber
\\
& \leq& \frac{ 4 }{1+w} \one(w \geq x) + 3 \bigl(1-\Phi(x)\bigr)
e^{w^2/2} \one(0 < w < x)
\\
&&{} + 4 \bigl(1-\Phi(x)\bigr) \frac{ 1 }{1+|w| } \one(w
\leq0)\nonumber
\end{eqnarray}
by using the following well-known inequality:
\[
\bigl(1- \Phi(w)\bigr) e^{w^2/2} \leq\min\biggl( \frac{ 1}{2},
\frac{ 1 }{ w
\sqrt{2 \pi}} \biggr),\qquad w >0.
\]
It is also known that $wf(w)$ is an increasing function; see
\citet{CS05}, Lemma 2.2. By (\ref{c1}) we have
%
\begin{equation} \label{t1-1a}
E\bigl(Wf(W)\bigr) - E\bigl(Rf(W)\bigr) = E\int f'(W+t) \,d \hat{\mu}
(t),
\end{equation}
and monotonicity of $w f(w)$ and equation (\ref{stein2}) imply that
%
\begin{equation} \label{t1-1b}
f'(W+t) \leq(W+\delta) f(W+\delta) + 1 - \Phi(x) - \one(W \geq x+
\delta).
\end{equation}
Recall that $\int d \hat{\mu}(t) = D$. Thus using nonnegativity of
$\hat{\mu}$ and combining
(\ref{t1-1a}) and (\ref{t1-1b}), we have
%
\begin{eqnarray}\label{t1-1}
&&
E \bigl(Wf(W)\bigr) - E\bigl(Rf(W)\bigr)
\nonumber
\\
&&\qquad\leq E \int\bigl((W+\delta) f(W+\delta) - W f(W)\bigr) \,d \hat
{\mu} (t) +
EWf(W) D
\\
&&\qquad\quad{} + E\int\bigl\{ 1-\Phi(x) - {\mathbf1}(W > x+ \delta) \bigr
\} \,d \hat{\mu} (t).\nonumber
\end{eqnarray}
Now, by (\ref{1-1}), the expression above can be written
%
\begin{eqnarray}\label{t1-2}
&&
E \bigl((W+\delta) f(W+\delta) - W f(W)\bigr)D
\nonumber
\\
& &\quad{} + EWf(W) D + E \bigl\{ 1-\Phi(x) - {\mathbf1}(W > x+ \delta)
\bigr\} D
\nonumber
\\
&&\qquad= 1- \Phi(x) - P(W > x+ \delta)
\\
&&\qquad\quad{} + E \bigl((W+\delta) f(W+\delta) - W f(W)\bigr)D +
EWf(W) D
\nonumber
\\
&&\qquad\quad{} + E \bigl\{ 1-\Phi(x) - {\mathbf1}(W > x+ \delta) \bigr\}
(D -1).\nonumber
\end{eqnarray}
Therefore, we have
%
\begin{eqnarray}\label{t1-2a}
&&
P(W > x+\delta) -\bigl(1- \Phi(x)\bigr)
\nonumber
\\
&&\qquad\leq E \bigl((W+\delta) f(W+\delta) - W f(W)\bigr)D + EWf(W)
(D -1)
\nonumber
\\
&&\qquad\quad{} + E \bigl\{ 1-\Phi(x) - {\mathbf1}(W > x+ \delta) \bigr\}
(D -1) + ERf(W)
\\
&&\qquad\leq\theta E \bigl((W+\delta) f(W+\delta) - W f(W)\bigr)+
\delta_1 E\bigl(|W| \bigl(1+|W|\bigr) f(W)\bigr)
\nonumber
\\
&&\qquad\quad{} + \delta_1 E \bigl| 1-\Phi(x) - {\mathbf1}(W > x+ \delta) \bigr|
\bigl(1+|W|\bigr) +
\delta_2 E\bigl(2+W^2\bigr) f(W),\nonumber
\end{eqnarray}
where we have again applied the monotonicity of $wf(w)$ as well as
(\ref{d2-0}), (\ref{c2}) and (\ref{c3}). Hence we have that
%
\begin{equation} \label{t1-3}
P(W > x+\delta) -\bigl(1- \Phi(x)\bigr) \le\theta I_1 +
\delta_1 I_2 + \delta_1 I_3 +
\delta_2 I_4,
\end{equation}
where
\begin{eqnarray*}
I_1 & =& E \bigl((W+\delta) f(W+\delta) - W f(W)\bigr),
\\
I_2 & =& E\bigl(|W| \bigl(1+|W|\bigr) f(W)\bigr),
\\
I_3 & =& E \bigl| 1-\Phi(x) - {\mathbf1}(W > x+ \delta) \bigr| \bigl(1+|W|\bigr)
\end{eqnarray*}
and
\[
I_4 = E\bigl(2+W^2\bigr) f(W).
\]
%
By (\ref{t11}) we have
%
\begin{eqnarray}\label{t1-4}
Ef(W) & \leq& 4 P(W > x) + 4 \bigl(1- \Phi(x)\bigr)
\nonumber\\[-8pt]\\[-8pt]
&&{} + 3 \bigl(1- \Phi(x)\bigr) Ee^{W^2/2}{\mathbf1}(0 < W \leq x).
\nonumber
\end{eqnarray}
Note that by (\ref{c1}) with $f(w)=w$,
\begin{eqnarray*}
EW^2 & =& E\int d \hat{\mu} (t) + E(RW)
\\
&= & ED + E(RW).
\end{eqnarray*}
Therefore, under the first alternative of (\ref{c3}), $EW^2 \le
(1+2\delta_1+\delta_2) +(\delta_1+2\delta_2)EW^2$, and under the
second alternative of (\ref{c3}), $EW^2 \le(1+2\delta_1+\delta_2)
+(\delta_1+\delta_2+\alpha)EW^2$.
This shows $EW^2 \leq O_\alpha(1)$. Hence the hypotheses of Lem\-ma~\ref
{l21} is satisfied with $C=O_\alpha(1)$,
and therefore also the conclusion of Lem\-ma~\ref{l22}. In particular,
%
\begin{eqnarray}\label{t1-5}\quad
Ee^{W^2/2} {\mathbf1}(0 < W \leq x) 
& \leq& P(0< W \le x)+ \int
_0^x y e^{y^2/2} P(W > y) \,dy
\nonumber\\[-8pt]\\[-8pt]
& \leq& O_\alpha(1) (1+x).\nonumber
\end{eqnarray}
Similarly, by (\ref{t11}) again,
\begin{eqnarray*}
EW^2 f(W) &\leq& 4 E|W|{\mathbf1}(W>x) + 4 \bigl(1- \Phi(x)\bigr) E|W|
\\
&&{}+ 3 \bigl(1- \Phi(x)\bigr) EW^2 e^{W^2/2}{\mathbf1}(0 < W \leq
x)
\end{eqnarray*}
and by Lemma~\ref{l22},
%
\begin{eqnarray} \label{t1-6}
EW^2 e^{W^2/2}{\mathbf1}(0 < W \leq x) &\le& \int
_0^x \bigl( y^3 + 2y
\bigr)e^{y^2/2} P(W > y) \,dy
\nonumber\\[-8pt]\\[-8pt]
& \leq& O_\alpha(1) \bigl( 1+ x^3\bigr).\nonumber
\end{eqnarray}
As to
\[
E|W|{\mathbf1}(W>x)\le P(W>x)+EW^2 I(W>x),
\]
it follows from Lemma~\ref{l21} that
%
\begin{equation}\label{t1-010}
P(W > x)\leq e^{-x^2} E e^{xW}= O_\alpha(1)
e^{-x^2/2}
\end{equation}
and
%
\begin{eqnarray} \label{t1-011}
\int_x^{\infty}t P(W\geq t)\,dt
&\leq& Ee^{xW} \int_x^{\infty}te^{-xt}\,dt
\nonumber\\
&=& Ee^{xW} x^{-2} \bigl(1+ x^2\bigr)
e^{-x^2} \nonumber\\[-8pt]\\[-8pt]
&\leq& O_\alpha(1) e^{-x^2/2} x^{-2}
\bigl(1+ x^2\bigr)
\nonumber\\
&\leq& O_\alpha(1) e^{-x^2/2}\nonumber
\end{eqnarray}
for $ x\geq1$. Thus
we have for $x>1$,
%
\begin{eqnarray}\label{t1-7}\quad
EW^2{\mathbf1}(W> x) & =& x^2 P( W > x) + \int
_x^\infty2 y P(W > y) \,dy
\nonumber\\[-8pt]\\[-8pt]
& \leq& O_\alpha(1) \bigl(1+x^2\bigr) e^{-x^2/2} \leq
O_\alpha(1) \bigl(1+ x^3\bigr) \bigl(1-\Phi(x)\bigr).
\nonumber
\end{eqnarray}
Clearly, (\ref{t1-7}) remains valid for $ 0 \leq x \leq1$ by the fact
that $EW^2 {\mathbf1}(W > x) \leq EW^2 \leq2$.
Combining (\ref{t1-5})--(\ref{t1-7}), we have
%
\begin{equation}\label{t1-8}
I_2 \leq O_\alpha(1) \bigl(1+x^{3}\bigr)
\bigl(1-\Phi(x)\bigr).
\end{equation}
Similarly,
%
\begin{equation}\label{t1-9}
I_4 \leq O_\alpha(1) \bigl(1+x^{3}\bigr)
\bigl(1-\Phi(x)\bigr)
\end{equation}
and
%
\begin{eqnarray}\label{t1-10}
I_3 &\leq&\bigl(1-\Phi(x)\bigr) E\bigl(2+W^2\bigr) + E
\bigl(2+W^2\bigr) {\mathbf1}(W \geq\delta+x) \nonumber\\[-8pt]\\[-8pt]
&\leq& O_\alpha(1)
\bigl(1+x^3\bigr) \bigl(1-\Phi(x)\bigr).\nonumber
\end{eqnarray}
Let $g(w)=(wf(w))'$. Then $I_1=\int_0^\delta Eg(W+t)\,dt$.
It is
easy to see that [e.g., \citet{CS01}]
%
\begin{equation}\label{t1-11}\quad
g(w) = \cases{ %
\bigl( \sqrt{2\pi}\bigl(1+w^2
\bigr)e^{w^2/2}\bigl(1- \Phi(w)\bigr) - w \bigr)\Phi(x), &\quad $w \geq x$,
\cr
\bigl( \sqrt{2\pi}\bigl(1+w^2\bigr)e^{w^2/2} \Phi(w) + w \bigr)
\bigl(1-\Phi(x)\bigr), &\quad $w< x$,}
\end{equation}
and
%
\begin{equation}\label{t1-012}
0 \leq\sqrt{2\pi}\bigl(1+w^2\bigr)e^{w^2/2}\bigl(1-\Phi(w)
\bigr) -w \leq\frac{ 2 }{
1+w^3},
\end{equation}
and we have for $0\leq t \leq\delta$,
%
\begin{eqnarray}
\label{t1-12}
&&
E g(W+t)
\nonumber\\
&&\qquad= Eg(W+t){\mathbf1}\{W+t\geq x\} + Eg(W+t){\mathbf1}\{W+t\leq0 \}
\nonumber
\\
&&\qquad\quad{}+ Eg(W+t){\mathbf1}\{0< W+t < x\}
\nonumber
\\
&&\qquad\leq\frac{ 2 }{1+x^3} P(W+t\geq x) + 2 \bigl(1-\Phi(x)\bigr
)P(W+t\leq0)
\\
&&\qquad\quad{} + \sqrt{2\pi} \bigl(1-\Phi(x)\bigr)\nonumber\\
&&\qquad\quad\hspace*{10.5pt}{}\times E \bigl\{ \bigl
(1+(W+t)^2+(W+t)
\bigr)e^{(W+t)^2/2} {\mathbf1}\{0< W+t < x \} \bigr\}
\nonumber
\\
&&\qquad= O_\alpha(1) \bigl(1+x^3\bigr) \bigl(1-\Phi(x)\bigr)
\nonumber
\end{eqnarray}
and hence
%
\begin{equation} \label{t1-12a}
I_1 = O_\alpha(1) \delta\bigl( 1+ x^3\bigr)
\bigl(1- \Phi(x)\bigr).
\end{equation}
Putting (\ref{t1-3}), (\ref{t1-8}), (\ref{t1-9}), (\ref{t1-10}) and
(\ref{t1-12a}) together gives
\[
P(W \geq x+ \delta) -\bigl(1-\Phi(x)\bigr) \leq O_\alpha(1)
\bigl(1-\Phi(x)\bigr) \theta\bigl(1+x^3\bigr) (\delta+
\delta_1+\delta_2) ,
\]
and therefore
%
\begin{equation}\label{t1-14}\qquad
P(W \geq x ) -\bigl(1-\Phi(x)\bigr) \leq O_\alpha(1) \bigl(1-\Phi(x)
\bigr) \theta\bigl(1+x^3\bigr) (\delta+\delta_1+
\delta_2).
\end{equation}
As to the lower bound, similarly to (\ref{t1-1b}) and (\ref{t1-2a}),
we have
\[
f'(W+t) \geq(W-\delta) f(W-\delta) + 1- \Phi(x) - \one( W \geq x -
\delta)
\]
and
\begin{eqnarray*}
&&
P(W > x -\delta) -\bigl(1- \Phi(x)\bigr)
\\
&&\qquad\geq\theta E \bigl((W-\delta) f(W-\delta) - W f(W)\bigr)-
\delta_1 E\bigl(|W| \bigl(1+|W|\bigr) f(W)\bigr)
\\
&&\qquad\quad{} - \delta_1 E \bigl| 1-\Phi(x) - {\mathbf1}(W > x- \delta) \bigr|
\bigl(1+|W|\bigr) -
\delta_2 E\bigl(2+W^2\bigr) f(W).
\end{eqnarray*}
Now follwoing the same proof of (\ref{t1-14}) leads to
%
\[
P(W \geq x ) -\bigl(1-\Phi(x)\bigr) \geq- O_\alpha(1) \bigl(1-\Phi(x)
\bigr) \theta\bigl(1+x^3\bigr) (\delta+ \delta_1 +
\delta_2).
\]
This completes the proof of Theorem~\ref{t1}.

\subsection{\texorpdfstring{Proof of Proposition \protect\ref{p21}}{Proof of Proposition 4.2}}\label{sec6.2}

For $n\ge2$, $X\sim U\{0,1,\ldots,n\}$, let $\tilde{S}_n=\tilde
{S}(X)$ be the number of $1$'s in the binary string of $X$ generated in
any system of binary codes satisfying (\ref{p21cond}). Without loss of
generality, assume that
%
\begin{equation}
\label{42-1} \tilde{S}(0)=0.
\end{equation}
Condition (\ref{p21cond}) allows $\tilde{S}(X)$ to be represented in
terms of the labels of the nodes in a binary tree described as follows.
Let $\tilde{T}$ be an infinite binary tree. For $k\ge0$, the nodes of
$\tilde{T}$ in the $k$th generation are denoted by (from left to
right) $(V_{k,0},\ldots, V_{k, 2^k-1})$. Each node is labeled by $0$
or $1$. Assume $\tilde{T}$ satisfies:
\begin{longlist}[(C3)]
\item[(C1)] the root is labeled by $0$;
\item[(C2)] the labels of two siblings are different;
\item[(C3)] infinite binary subtrees of $\tilde{T}$ with roots $\{
V_{k,0}\dvtx k\ge0\}$ are the same as $\tilde{T}$.
\end{longlist}
For $2^{k-1}-1< n\le2^{k}-1$, represent $0,\ldots,n$ by the nodes
$V_{k,0},\ldots, V_{k,n}$, respectively. Then $\tilde{S}(X)$ is the
sum of $1$'s in the shortest path from $V_{k, X}$ to the root of the
tree. Condition (C3) implies that $\tilde{S}(X)$ does not depend on $k$
so that the representation is well defined.

We consider two extreme cases. Define a binary tree $T$ by always
assigning $0$ to the left sibling and $1$ to the right sibling. Then
the number of $1$'s in the binary string of $X$ is that in the binary
expansion of $X$. Denote it by $S_n(=S(X))$. Next, define a binary tree
$\bar{T}$ by assigning $V_{k,0}=0$, $V_{k,1}=1$ for all $k$ and
assigning $1$ to the left sibling and $0$ to the right\vspace*{1pt} sibling for all
other nodes. Let the number of $1$'s in the binary string of $X$ on
$\bar{T}$ be $\bar{S}_n(=\bar{S}(X))$. Both $T$ and $\bar{T}$ are
infinity binary trees satisfying C1, C2 and C3, and both $S_n$ and
$\bar{S}_n$ satisfy (\ref{p21cond}). It is easy to see that for all
integers $n\ge0$,
%
\begin{equation}
S_n\le_{st} \tilde{S}_n \le_{st}
\bar{S}_n,
\end{equation}
where $\le_{st}$ denotes stochastic ordering. Therefore, it suffices
to prove Cram\'er moderate deviation results for $W_n$ and $\bar{W}_n$
where $W_n=(S_n-\frac{k}{2})/\sqrt{\frac{k}{4}}$ and $\bar
{W}_n=(\bar{S}_n-\frac{k}{2})/\sqrt{\frac{k}{4}}$. We suppress the
subscript $n$ in the following and follow \citet{Dia77} in
constructing the exchangeable pair $(W, W')$. Let $I$ be a random
variable uniformly distributed
over the set $\{1, 2, \ldots, k\}$ and independent of $X$, and let the
random variable $X'$ be defined by
\[
X' = \sum_{i=1}^k
X_i' 2^{k-i},
\]
where
%
\begin{equation}\label{p21-1}
X_i'= \cases{ X_i, &\quad if $i \not= I$,
\vspace*{1pt}\cr
1, &\quad if $i =I,X_I=0$ \mbox{ and } $X+2^{k-I} \leq n$,
\vspace*{1pt}\cr
0, &\quad else.}
\end{equation}
Let $S'=S-X_I+X_I'$, $W'=(S'-k/2)/\sqrt{k/4}$. As proved in
\citet{Dia77}, $(W,W')$ is an exchangeable pair and
%
\begin{eqnarray}
\label{p21-2}
E\bigl(W-W'|W\bigr)&=&\lambda\biggl(W-\biggl(-\frac{E(Q|W)}{\sqrt{k}}
\biggr)\biggr),
\\
%
\label{p21-3}
\frac{1}{2\lambda}E\bigl(\bigl(W-W'\bigr)^2|W\bigr)
-1&=&-\frac{E(Q|W)}{k},
\end{eqnarray}
where\vspace*{2pt} $\lambda=2/k$ and $Q = \sum_{i=1}^k I( X_i =0, X+ 2^{k-i} > n)$.
From Lemma~\ref{l41} and Theorem
\ref{t1} [with $\delta=O(k^{-1/2}), \delta_1 = O(k^{-1}), \delta_2
= O(k^{-1/2})$],
\[
\frac{ P( W \geq x) }{1- \Phi(x)} = 1 + O(1) \bigl(1+x^3\bigr) \frac
{1}{\sqrt{k}}
\]
for $0 \leq x \leq k^{1/6}$. Repeat the above argument for $-W$, and we have
\[
\frac{ P( W \le-x) }{1- \Phi(x)} = 1 + O(1) \bigl(1+x^3\bigr) \frac
{1}{\sqrt{k}}
\]
for $0 \leq x \leq k^{1/6}$.

Next, we notice that $S$ and $\bar{S}$ can be written as, with $X\sim
U\{0,1,\ldots, n\}$,
\[
S=I\bigl(0\le X\le2^{k-1}-1\bigr) S +I\bigl(2^{k-1}\le X \le n
\bigr) S
\]
and
\[
\bar{S}=I\bigl(0\le X\le2^{k-1}-1\bigr) \bar{S} +I\bigl(2^{k-1}
\le X \le n\bigr)\bar{S}.
\]
Therefore,
\begin{eqnarray*}
&&-W-\frac{1}{\sqrt{k/4}}
\\
&&\qquad= \biggl(-\frac{1}{2}+ I\bigl(0\le X\le2^{k-1}-1\bigr)\biggl(\frac{k-1}{2}-S\biggr)
\\
&&\qquad\quad\hspace*{42.5pt}{}+I\bigl(2^{k-1} \le X\le n\bigr) \biggl(\frac{k-1}{2}-S\biggr)
\biggr)
\Big/{\sqrt{k/4}}
\end{eqnarray*}
and
\begin{eqnarray*}
\bar{W}&=&\biggl(-\frac{1}{2}+ I\bigl(0\le
X\le2^{k-1}-1\bigr)\biggl(\bar{S}-\frac {k-1}{2}\biggr)\\
&&\hspace*{40.7pt}{} +I\bigl(2^{k-1}
\le X\le n\bigr)
\biggl(\bar{S}-\frac{k-1}{2}\biggr)\biggr)\Big/{\sqrt{k/4}}.
\end{eqnarray*}
Conditioning on $0\le X\le2^{k-1}-1$, both the distributions of $S(X)$
and $\bar{S}(X)$ are $\operatorname{Binomial}(k-1,1/2)$, which yields
\[
\mathcal{L}\biggl(\frac{k-1}{2}-S\Big| 0\le X\le2^{k-1}-1\biggr) =
\mathcal{L} \biggl(\bar{S} -\frac{k-1}{2}\Big| 0\le X\le2^{k-1}-1
\biggr).
\]
On the other hand, when $2^{k-1} \le X\le n$, $\bar{S}(X)=k-1-S(X)$.
Therefore, $\bar{W}$ has the same distribution as $-W-1/\sqrt{\frac
{k}{4}}$, which implies Cram\'er moderate deviation results also holds
for $\bar{W}$. Thus finishes the proof of Proposition~\ref{p21}.


\begin{lemma}\label{l41}
We have $E(Q|S)= O(1)(1+|W|)$.
\end{lemma}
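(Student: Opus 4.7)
I would reduce the problem to bounding $E(L-S_L\mid S)$, where $L=\max\{\ell: X_j=n_j\text{ for }j\le\ell\}$ is the length of the longest common binary prefix of $X$ and $n$, and $S_L=\sum_{j\le L}n_j$ is the number of ones in that prefix. A bitwise analysis of each indicator in the definition of $Q$ shows that if $X_i=0$ and $X+2^{k-i}>n$ then $X_j=n_j$ for all $j<i$: since $X_i=0$, adding $2^{k-i}$ flips only bit $i$ with no carry, and the two constraints $X\le n$ and $X+2^{k-i}>n$ together force the high-order parts of $X$ and $n$ to coincide. Splitting the sum by whether $n_i\in\{0,1\}$ yields the sharp pointwise bound
\[
Q=\#\{i\le L: n_i=0\}+\one\bigl[L<k,\ n_{L+1}=1,\ X^{<L+1}>n^{<L+1}\bigr]\le L-S_L+1,
\]
so it remains to show $E(L-S_L\mid S)=O(1)(1+|W|)$.

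\textbf{Conditional distribution.} Since $X$ is uniform on $\{0,\ldots,n\}$ and $n_1=1$ (because $n\ge 2^{k-1}$), for $\ell<k$ with $n_{\ell+1}=1$ the event $\{L=\ell,\ S=s\}$ requires $X_j=n_j$ for $j\le\ell$, $X_{\ell+1}=0$ (so $X\le n$ is automatic), and exactly $s-S_\ell$ ones among the remaining $k-\ell-1$ free bits. Hence
\[
P(L=\ell,\ S=s)=\frac{1}{n+1}\binom{k-\ell-1}{s-S_\ell},
\]
with a boundary term $P(L=k,\ S=S(n))=1/(n+1)$, and therefore
\[
E(L-S_L\mid S=s)=\frac{\sum_\ell(\ell-S_\ell)\binom{k-\ell-1}{s-S_\ell}}{\sum_\ell\binom{k-\ell-1}{s-S_\ell}}.
\]

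\textbf{Estimation and main obstacle.} Writing $\Delta_\ell=S_\ell-\ell/2$, a walk with $\pm1/2$ steps and $\Delta_0=0$, and $a_0=s-(k-1)/2\approx W\sqrt{k}/2$, the binomial coefficient in the numerator peaks where $\Delta_\ell\approx a_0$. At such a peak $\ell^*$ one has $\ell^*-S_{\ell^*}=\ell^*/2-\Delta_{\ell^*}\approx\ell^*/2-a_0$, and the $\pm1/2$-step structure of the walk forces $\ell^*\ge 2|a_0|$, so the peak value of $\ell-S_\ell$ is controlled by $|W|$. Combining this with geometric suppression $2^{-\ell}$ away from peaks, a local-CLT estimate $\binom{m}{j}/2^m\lesssim m^{-1/2}\exp(-2(j-m/2)^2/m)$, and the lower bound on the denominator by the term sitting at the dominant peak, one obtains $E(L-S_L\mid S=s)=O(1)(1+|W|)$. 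The main obstacle is to make this heuristic rigorous for arbitrary bit patterns of $n$: when the walk $\Delta_\ell$ is non-monotone the weighted conditional distribution of $L$ may have multiple local modes, and one must sum the contributions of these modes carefully to exhibit the cancellation between $\ell^*/2$ and $a_0$ that produces the sharp linear-in-$|W|$ dependence rather than the naive $O(|W|\sqrt{k})$ scaling.
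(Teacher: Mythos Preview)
Your reduction is correct and matches the paper's setup in different notation. The paper parametrizes by ``category $i$'', where $1=p_1<p_2<\cdots$ are the positions of the ones in $n$; category $i$ corresponds exactly to your event $L=p_i-1$, the paper's $a_i=p_i-i+1$ equals your $L-S_L+1$, and the count $\binom{k-(i-1)-a_i}{S-(i-1)}$ is your $\binom{k-\ell-1}{s-S_\ell}$. So both arrive at
\[
E(Q\mid S)\ \le\ \frac{1}{A}\sum_i a_i\binom{k-(i-1)-a_i}{S-(i-1)}\ +\ \text{(boundary term)},\qquad A=\sum_i\binom{k-(i-1)-a_i}{S-(i-1)}.
\]

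The gap is entirely in your estimation step, which you yourself flag as heuristic. Your peak picture is also slightly off: since $p_1=1$, the term $\ell=0$ is always present and carries the largest weight $\binom{k-1}{S}$ (by Vandermonde it dominates every other summand), and there $\ell-S_\ell=0$; so the issue is not locating a peak but controlling the \emph{tail} contributions relative to this leading term. Your local-CLT sketch does not do this, and the multi-mode obstacle you identify is genuine.

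The paper bypasses all of this with an elementary case analysis, bounding the ratios $\binom{k-(i-1)-a_i}{S-(i-1)}\big/\binom{k-1}{S}$ directly as telescoping products. For $S\ge k/2$ one has $\prod_{j=1}^{a_i-1}\frac{k-S-j}{k-j}\le 2^{-(a_i-1)}$, so terms with $a_i>2\log_2 k$ have ratio $\le k^{-2}$ and their total contributes $O(1)$, while terms with $a_i\le 2\log_2 k$ give a weighted average $\le 2\log_2 k$. For $S<k/2$ one instead uses $\prod_{j=0}^{i-2}\frac{S-j}{k-1-j}\le 2^{-(i-1)}$ and splits by $i\le 2\log_2 k$ versus $i>2\log_2 k$. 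This yields $E(Q\mid S)=O(\log_2 k)$ uniformly, which already gives the lemma when $|W|>\log_2 k$. A short refinement of the same product bounds shows that when $|S-k/2|\le\sqrt{k/4}\,\log_2 k$ one in fact has $E(Q\mid S)=O(1)$. This is the missing idea; you should replace the peak heuristic with these explicit ratio bounds.
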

\begin{pf}
Write
\[
n = \sum_{i \ge1} 2^{k-p_i}
\]
%
with $1=p_1<p_2<\cdots\le p_{ k_1}$
the positions of the ones in the binary expansion of~$n$, where $k_1
\leq k$. Recall
that $X$ is uniformly distributed over $\{0, 1,\ldots,n\}$, and that
\[
X=\sum_{i=1}^k X_i
2^{k-i}
\]
with exactly $S$ of the indicator variables $X_1,\ldots,X_k$ equal to 1.

We say that $X$ falls in category $i$, $i=1,\ldots,k_1$, when
%
\begin{equation}
\label{MDbincati} X_{p_1}=1,\qquad
X_{p_2}=1,\ldots,X_{p_{i-1}}=1
\quad\mbox{and}\quad X_{p_i}=0.
\end{equation}

We say that $X$ falls in category $k_1+1$ if $X=n$. This special
category is nonempty only when $S=k_1$, and in this case,
$Q=k-k_1$, which gives the last term in (\ref{l41-1}).

Note that if $X$ is in category $i$ for $i\le k_1$,
then, since $X$ can be no greater than~$n$, the digits of $X$ and $n$
match up to the $p_i$th,
except for the digit in place $p_i$, where $n$ has a one, and $X$ a
zero. Further, up to this digit,
$n$ has $p_i-i$ zeros, and so $X$ has $a_i=p_i-i+1$ zeros. Changing any
of these $a_i$
zeros, except the zero in position $p_i$ to ones, results in a number
$n$ or greater, while changing any other zeros,
since digit $p_i$ of $n$ is one and of $X$ zero, does not. Hence $Q$ is
at most $a_i$ when $X$
falls in category $i$.
Since $X$ has $S$ ones in its expansion, $i-1$ of which are accounted
for by (\ref{MDbincati}),
the remaining $S-(i-1)$ are uniformly distributed over the\vadjust{\goodbreak}
$k-p_i=k-(i-1)-a_i$ remaining digits
$\{X_{p_i+1},\ldots,X_k\}$. Thus, we have the inequality
%
\begin{equation}
\label{l41-1} E(Q|S) \le\frac{1}{A}\sum_{i \ge1}
\pmatrix{k-(i-1)-a_i
\cr
S-(i-1)}a_i +
\frac{I(S=k_1)}{A}(k-k_1),
\end{equation}
where
\[
A = \sum_{i \ge1} \pmatrix{k-(i-1)-a_i
\cr
S-(i-1)}+I(S=k_1)
\]
and $1=a_1\leq a_2\leq a_3\leq\cdots\,$.

Note that if $k_1=k$, the last term of (\ref{l41-1}) equals $0$. When
$k_1< k$, we have
%
\begin{equation}
\frac{I(S=k_1)}{A} (k-k_1) \le\pmatrix{k-1
\cr
k_1}^{-1} (k-k_1) \le1,
\end{equation}
so we omit this term in the following argument.

We consider two cases.

\textit{Case} 1: $S\geq k/2$. As $a_i \ge1$ for all $i$, there are at
most $k+1$ nonzero terms in the sum
(\ref{l41-1}). Divide the summands into two groups, those for which
$a_i\leq2\log_2 k$ and those with $a_i> 2\log_2 k$. The first group
can sum to no more than
$2\log_2 k$
because the sum is like weighted average of $a_i$.

For the second group, note that
%
\begin{eqnarray} \label{l41-2}
&&\pmatrix{k-(i-1)-a_i
\cr
S-(i-1)}\Big/A
\nonumber
\\[-2pt]
&&\qquad\leq\pmatrix{k-(i-1)-a_i
\cr
S-(i-1)}\bigg/\pmatrix{k-1
\cr
S}
\nonumber\\[-9pt]\\[-9pt]
&&\qquad= \prod_{j=1}^{a_i-1}
\biggl( \frac{k-S-j}{k-j} \biggr) \prod_{j=0}^{i-2}
\biggl( \frac{S-j}{k-(a_i-1)-1-j} \biggr)
\nonumber
\\[-2pt]
&&\qquad\leq\frac{1}{2^{a_i-1}}\leq\frac{1}{k^2},\nonumber
\end{eqnarray}
where the second inequality follows from $S\geq k/2$,
and the last inequality from $a_i>2\log_2 k$. Therefore, the sum of
the second group of terms
is bounded by $1$.

\textit{Case} 2: $S< k/2$. Divide the sum on the right-hand side into two
groups according to whether $i \leq2 \log_2 k$ or $i> 2 \log_2 k$.
Clearly,
%
\begin{eqnarray*}
&&
\pmatrix{k-(i-1)-a_i
\cr
S-(i-1)}\Big/A \\[-2pt]
&&\qquad \leq \prod
_{j=0}^{i-2} \biggl( \frac{S-j}{k-1-j} \biggr) \prod
_{j=1}^{a_i-1} \biggl( \frac{k-S-j}{k-(i-1)-j}
\biggr)
\nonumber
\\[-2pt]
&&\qquad \leq 1/2^{i-1}
\end{eqnarray*}
using the assumption $S< k/2$ and the fact that $S\geq i-1$.
The above inequality is true for all $i$, so the summation for the part
where $i> 2\log_2 k$ is bounded by $1$.

Next we consider
$i\leq2\log_2 k$. When $S\geq k ( \frac{\log a_i}{a_i-1}
) +2\log_2 k$,
we have\break
$a_i(\frac{k-S-1}{k-(i-1)-1})^{a_i-1} \leq1$.
Solving $S$ from the inequality $a_i(\frac{k-S-1}{k-(i-1)-1})^{a_i-1}
\leq1$,
we see that it is equivalent to the inequality
$S\geq(1-e^{-({\log a_i})/({a_i-1})})k-1+e^{-({\log a_i})/({a_i-1})}i$,
which is a result of the above assumption on $S$ when $i< 2\log_2 k$.
Now we have
%
\begin{eqnarray}\label{l41-3}
&&
a_i\pmatrix{k-(i-1)-a_i
\cr
S-(i-1)}\Big/A
\nonumber
\\
&&\qquad\leq a_i\pmatrix{k-(i-1)-a_i
\cr
S-(i-1)}\bigg/
\pmatrix{k-1
\cr
S}
\nonumber\\[-8pt]\\[-8pt]
&&\qquad= a_i \prod_{j=0}^{i-2}
\biggl( \frac{S-j}{k-1-j} \biggr) \prod_{j=1}^{a_i-1}
\biggl( \frac{k-S-j}{k-(i-1)-j} \biggr)
\nonumber
\\
&&\qquad\leq a_i\frac{1}{2^{i-1}}
\biggl(\frac{k-S-1}{k-(i-1)-1}
\biggr)^{a_i-1}\leq\frac{1}{2^{i-1}}\nonumber
\end{eqnarray}
%
using the fact that $a_i(\frac{k-S-1}{k-(i-1)-1})^{a_i-1} \leq1$.

On the other hand, if $S< k ( \frac{\log a_i}{a_i-1} ) +
2\log_2
k$, then 
$a_i S/(k-1)=O(1) \log_2 k$, which implies
%
\begin{eqnarray*}
&&a_i \pmatrix{k-(i-1)-a_i
\cr
S-(i-1)}\Big/A \\
&&\qquad \leq
\frac{a_i S}{k-1} \prod_{j=1}^{i-2} \biggl(
\frac{S-j}{k-1-j} \biggr) \prod_{j=1}^{a_i-1}
\biggl( \frac{k-S-j}{k-(i-1)-j} \biggr)
\\
&&\qquad = O(1)\log_2 k/2^{i-2}.
\end{eqnarray*}
This proves that the right-hand side of (\ref{l41-1}) is bounded by
$O(1)\log_2
k$.

To complete the proof of the lemma, that is, to prove $E(Q|W)\leq
C(1+|W|)$, we only need to show
that $E(Q|S)\leq C$ for some universal constant $C$ when $|W|\leq
\log_2 k$, that is, when $k/2- \sqrt{k/4} \log_2 k \leq S\leq k/2+
\sqrt{k/4} \log_2 k $. Following the argument in case 2 above, we
only need to consider the summands where $i\leq2\log_2 k$
because the other part where $i> 2\log_2 k$ is bounded by $1$ as
proved in case 2.

When $a_i, k$ are bigger than some universal constant, $k/2-
\sqrt{k/4} \log_2 k \geq\frac{\log a_i}{a_i-1}\times k+2\log_2 k$,
which implies $(\frac{k-S-1}{k-(i-1)-1})^{a_i-1}\times a_i\leq1$
and ${k-(i-1)-a_i\choose S-(i-1)}\times a_i/A\leq1/2^{i-1}$. Since
both parts for $i\leq2\log_2 k$ and $i> 2\log_2 k$ are bounded by
some constant, $E(Q|S)\leq C$ when $|W|\leq\log_2 k$, and hence the
lemma is proved.
\end{pf}

\subsection{\texorpdfstring{Proof of Propositions \protect\ref{p4} and \protect\ref{p5}}{Proof of Propositions 4.3 and 4.4}}\label{sec6.3}

Let $\tilde{W}$ have the conditional distribution of $W$ ($W_1$, $W_2$,
resp.) given $|W|\le c_1 \sqrt{n}$ ($|W_1|, |W_2| \le c_1 \sqrt{n}$,
resp.) where $c_1$ is to be determined. If we can prove that
%
\begin{equation}
\frac{ P(\tilde{W} \geq x) }{1-\Phi(x)} =1+ O(1) \bigl(1+x^3\bigr)
/\sqrt{n}
\end{equation}
for $0\le x \le n^{1/6}$,
then from the fact that [\citet{E85}]
%
\begin{equation}
\label{CW0} P\bigl(|W|>K \sqrt{n}\bigr) \le e^{-n C(K)}
\end{equation}
and
\[
P\bigl(|W_1|>K \sqrt{n}| S<0\bigr) \le e^{-n C(K)},\qquad
P\bigl(|W_2|>K \sqrt{n}| S>0\bigr) \le e^{-n C(K)}
\]
for any positive number $K$ where $C(K)$ is a positive constant
depending only on~$K$,
we have, with $\delta_2=O(1/\sqrt{n})$,
\begin{eqnarray*}
\frac{ P(W \geq x) }{1-\Phi(x)} &\le&\frac{ P(\tilde{W} \ge x)+P(\delta
_2|W|>1/2) }{1-\Phi(x)}
\\
&=& 1+ O(1) \bigl(1+x^3\bigr) /\sqrt{n}
\end{eqnarray*}
for $0\le x \le n^{1/6}$. Similarly, (\ref{p5a}) and (\ref{p5b}) are also
true. Therefore, we prove Cram\'er moderate deviation for $\tilde{W}$
(still denoted by $W$ in the following) defined below. Assume the state
space of the spins is $\Sigma=(\sigma_1, \sigma_2, \ldots, \sigma_n)\in
\{-1, 1\}^n$ such that $\sum_{i=1}^n \sigma_i/n \in[a,b]$
where $[a,b]$ is any interval within which there is only one solution
$m$ to (\ref{CW1}). Let $S=\sum_{i=1}^n \sigma_i$, $W=\frac
{S-nm}{\sigma}$ and $\sigma^2=n\frac{1-m^2}{1-(1-m^2)\beta}$. Note
that in cases 1 and 2, $1-(1-m^2)\beta>0$, thus $\sigma^2$ is
well defined. Moreover, $[a,b]$ is chosen such that $|W|\le c_1 \sqrt
{n}$. The joint distribution of the spins is
\[
Z_{\beta, h}^{-1} \exp\Biggl(\frac{\beta\sum_{1\le i<j \le n} \sigma_i
\sigma_j}{n}+\beta h\sum
_{i=1}^n \sigma_i\Biggr).
\]

Let $I$ be a random variable uniformly distributed over $\{1,
\ldots, n\}$ independent of $\{\sigma_i, 1 \leq i \leq n\}$. Let
$\sigma_i'$ be a random sample from the conditional distribution of
$\sigma_i$ given $\{\sigma_j, j \not=i, 1 \leq j \leq n\}$. Define
$W' = W- ( \sigma_I - \sigma_I')/\sigma$. Then $(W, W')$ is an
exchangeable pair.
Let
\[
A(w)=\frac{\exp(-\beta(m+h)- \beta\sigma w/n+\beta/n)}{\exp
(-\beta(m+h)- \beta\sigma w/n+\beta/n)+\exp(\beta(m+h)+ \beta
\sigma w/n-\beta/n)}
\]
and
\[
B(w)=\frac{\exp(\beta(m+h)+ \beta\sigma w/n+\beta/n)}{\exp(\beta
(m+h)+ \beta\sigma w/n+\beta/n)+\exp(-\beta(m+h)- \beta\sigma
w/n-\beta/n)}.
\]
It is easy to see that
\begin{eqnarray*}
\hspace*{-4pt}&& \frac{ e^{-\beta(m+h)- \beta\sigma w/n} }{ e^{-\beta(m+h)-\beta
\sigma w /n} + e^{\beta(m+h)+\beta\sigma w /n }}
\\
\hspace*{-4pt}&&\qquad\leq A(w) = \frac{ \exp(-\beta(m+h)-\beta\sigma w/n) }{\exp
(-\beta(m+h)-\beta\sigma w/n)+\exp(\beta(m+h)+\beta\sigma w /n - 2
\beta/n)}
\\
\hspace*{-4pt}&&\qquad\leq\frac{ e^{-\beta(m+h)- \beta\sigma w/n} }{ e^{-\beta
(m+h)-\beta\sigma w /n} + e^{\beta(m+h)+\beta\sigma w /n }}e^{2\beta
/n}
\end{eqnarray*}
and
\begin{eqnarray*}
\hspace*{-4pt}&&\frac{ e^{\beta(m+h) + \beta\sigma w/n} }{ e^{\beta(m+h)+\beta
\sigma w /n} + e^{-\beta(m+h)-\beta\sigma w /n }}
\\
\hspace*{-4pt}&&\qquad\leq B(w) = \frac{ \exp(\beta(m+h)+\beta\sigma w /n) }{\exp
(\beta(m+h)+\beta\sigma w /n)+\exp(-\beta(m+h)- \beta\sigma w /n -
2 \beta/n)}
\\
\hspace*{-4pt}&&\qquad\leq\frac{ e^{\beta(m+h) + \beta\sigma w/n} }{ e^{\beta
(m+h)+\beta\sigma w /n} + e^{-\beta(m+h)-\beta\sigma w /n
}}e^{2\beta/n}.
\end{eqnarray*}
Therefore
%
\[
A(W)+B(W)=1+O(1)\frac{1}{n}
\]
and
\[
A(W)-B(W)=-\tanh\bigl(\beta(m+h)+\beta\sigma W/n\bigr)+O(1)
\frac{1}{n}. 
\]
Note that
\begin{eqnarray*}
&&E\bigl(W-W'|\Sigma\bigr)
\\
&&\qquad= \frac{1}{\sigma}E(\sigma_{I}-\sigma_{I}|\Sigma)
\\
&&\qquad= \frac{2}{\sigma} E\bigl(I\bigl(\sigma_I=1,
\sigma_I'=-1\bigr)-I\bigl(\sigma_I=-1,
\sigma_I'=1\bigr)|\Sigma\bigr)
\\
&&\qquad= \frac{2}{\sigma} \frac{\sigma W+nm+n}{2n} A(W) I(S-2\ge an)
\\
&&\qquad\quad{}-\frac{2}{\sigma}
\frac{n-\sigma W-nm}{2n} B(W)I(S+2\le bn)
\\
&&\qquad= \bigl(A(W)+B(W)\bigr) \biggl(\frac{W}{n}+\frac{m}{\sigma
}\biggr)+
\frac{1}{\sigma
}\bigl(A(W)-B(W)\bigr)
\\
&&\qquad\quad{}-\frac{\sigma W+nm+n}{\sigma n} A(W) I(S-2<an)\\
&&\qquad\quad{}+\frac
{n-\sigma
W-nm}{\sigma n} B(W) I(S+2>bn)
\\
&&\qquad= \biggl(\frac{W}{n}+\frac{m}{\sigma}\biggr) \biggl(1+O\biggl(
\frac{1}{n}\biggr)\biggr) -\frac
{1}{\sigma} \biggl(\tanh\biggl(
\beta(m+h)+\frac{\beta\sigma W}{n}\biggr)+O\biggl(\frac
{1}{n}\biggr)\biggr)
\\
&&\qquad\quad{}-\frac{S+n}{\sigma n}A(W)I(S-2<an)+\frac{n-S}{\sigma
n}B(W) I(S+2>bn)
\\
&&\qquad=\lambda(W-R),
\end{eqnarray*}
where
\[
\lambda=\frac{1-(1-m^2)\beta}{n}>0
\]
and
\begin{eqnarray*}
R&=&\frac{1}{\lambda}\frac{\tanh'' (\beta(m+h)+\xi) \beta^2
\sigma}{2n^2} W^2+\frac{1}{\lambda}
\frac{S+n}{\sigma n}A(W) I(S-2<an)
\\
&&{}-\frac{1}{\lambda}\frac{n-S}{\sigma n}B(W) I(S+2>bn)+O(1) \biggl(
\frac
{W}{n}+\frac{1}{\sigma}\biggr),
\end{eqnarray*}
where $\xi$ is between $0$
and $\beta\sigma W/n$.
Similarly,
\begin{eqnarray*}
\hspace*{-4pt}&&E\bigl(\bigl(W-W'\bigr)^2|\Sigma\bigr)
\\
\hspace*{-4pt}&&\qquad= \frac{4}{\sigma^2} E\bigl(I\bigl(\sigma_I=1,
\sigma_I'=-1\bigr)+I\bigl(\sigma_I=-1,
\sigma_I'=1\bigr)|\Sigma\bigr)
\\
\hspace*{-4pt}&&\qquad=\frac{2(1-m^2)}{\sigma^2}+O(1)\frac{W}{n \sigma}+O\biggl(\frac{1}{n
\sigma^2}
\biggr)+O\biggl(\frac{I(S-2<an\mbox{ or } S+2>bn)}{\sigma^2}\biggr).
\end{eqnarray*}
Therefore, recall that $\sigma^2=n\frac{1-m^2}{1-(1-m^2)\beta}$,
\[
\bigl|E(D|W)-1\bigr|\le O\biggl(\frac{1}{\sqrt{n}}\biggr) \bigl(1+|W|\bigr).
\]
For $R$, with $\delta_2=O(1/\sqrt{n})$,
\[
\bigl|E(R|W)\bigr|\le\delta_2 \bigl(1+W^2\bigr),
\]
and if $c_1$ is chosen such that $\delta_2 |W|\le1/2$, the second
alternative of (\ref{c3}) is satisfied with $\alpha=1/2$. Thus from
Theorem~\ref{t1}, we have the following moderate deviation
result for $W$:
\[
\frac{ P(W \geq x) }{1-\Phi(x)} = 1+ O(1) \bigl(1+x^3\bigr) \frac
{1}{\sqrt{n}}
\]
for $0\le x\le n^{1/6}$. This completes the proof of (\ref{p4a}) and
(\ref{p5a}).

\subsection{\texorpdfstring{Proof of Proposition \protect\ref{t71}}{Proof of Proposition 4.5}}\label{sec6.4}

Since $(1-\Phi(x)) \geq{ 1 \over2 (1+x)} e^{-x^2/2}$ for $x\geq0$,
(\ref{t71a}) becomes trivial if $ x\gamma\geq1/8$. Thus we can assume
%
\begin{equation} \label{t71-00}
x \gamma\leq1/8.
\end{equation}
Let $f=f_x$ be
the Stein solution to equation (\ref{stein2}).
Let
$W^{(i)} = W- \xi_i$ and
$
K_i(t) = E\xi_i ( I\{ 0 \leq t \leq\xi_i\} - I \{\xi_i \leq t \leq
0\})$.
It is known that
[see, e.g., (2.18) in \citet{CS05}]
\[
EWf(W) = \sum_{i=1}^n E \int
_{-\infty}^\infty f'\bigl(W^{(i)} +
t\bigr) K_i(t) \,dt.
\]
Since $\int_{-\infty}^\infty K_i(t) \,dt = E\xi_i^2$, we have
%
\begin{eqnarray}\label{t71-1}
&&P(W \geq x) - \bigl(1-\Phi(x)\bigr)
\nonumber
\\
&&\qquad= EWf(W) - Ef'(W)
\nonumber
\\
&&\qquad= \sum_{i=1}^n E \int
_{-\infty}^\infty\bigl( f'
\bigl(W^{(i)} +t\bigr) - f'(W)\bigr) K_i(t) \,dt
\nonumber\\[-8pt]\\[-8pt]
&&\qquad= \sum_{i=1}^n E \int
_{-\infty}^\infty\bigl( \bigl(W^{(i)} + t\bigr)
f\bigl(W^{(i)}+t\bigr) - W f(W)\bigr) K_i(t) \,dt
\nonumber
\\
&&\qquad\quad{} + \sum_{i=1}^n E \int
_{-\infty}^\infty\bigl( I\bigl\{W^{(i)} + t \geq
x\bigr\} - I\{W \geq x\}\bigr) K_i(t) \,dt
\nonumber
\\
&&\qquad:= R_1 + R_2.\nonumber
\end{eqnarray}
It suffices to show that
%
\begin{equation} \label{r1}
|R_1| \leq C \bigl(1+x^3\bigr) \gamma\bigl(1- \Phi(x)
\bigr) e^{x^3 \gamma}
\end{equation}
and
%
\begin{equation}\label{R2}
|R_2| \leq C \bigl(1+x^2\bigr) \gamma\bigl(1- \Phi(x)
\bigr) e^{x^3 \gamma}.
\end{equation}
To estimate $R_1$, let $g(w) = (wf(w))'$.
It is easy to see that
%
\begin{equation}\label{r1-1}
R_1 = \sum_{i=1}^n E \iint_{\xi_i}^t g\bigl(W^{(i)} +s\bigr) \,ds
K_i(t) \,dt.
\end{equation}
%
By (\ref{t1-11}) and (\ref{t1-012}), following
the proof of (\ref{t1-12}), we have
%
\begin{eqnarray}
\label{t71-2}
&&
E g\bigl(W^{(i)} +s\bigr)
\nonumber\\
&&\qquad= Eg\bigl(W^{(i)}+s\bigr)I\bigl\{W^{(i)}+s\geq x\bigr\} +
Eg\bigl(W^{(i)}+s\bigr)I\bigl\{W^{(i)}+s\leq0 \bigr\}
\nonumber
\\
&&\qquad\quad{}+ Eg\bigl(W^{(i)} +s \bigr)I\bigl\{0< W^{(i)} + s <
x\bigr\}
\nonumber
\\
&&\qquad\leq\frac{ 2 }{1+x^3} P\bigl(W^{(i)} +s\geq x\bigr) + 2 \bigl(1-
\Phi(x)\bigr)P\bigl(W^{(i)}+s\leq0\bigr)
\nonumber
\\
&&\qquad\quad{} + \sqrt{2\pi} \bigl(1-\Phi(x)\bigr) \nonumber\\
&&\qquad\quad\hspace*{11pt}{}\times E \bigl\{ \bigl
(1+\bigl(W^{(i)}+s
\bigr)^2\bigr)e^{(W^{(i)}+s)^2/2} I\bigl\{0< W^{(i)}+s < x\bigr
\} \bigr\}
\\
&&\qquad\leq\frac{ 2 }{1+x^3} P\bigl(W^{(i)}\geq x-s\bigr) + 2 \bigl(1-
\Phi(x)\bigr)P\bigl(W^{(i)}+s \leq0\bigr)
\nonumber
\\
&&\qquad\quad{} - \sqrt{2\pi} \bigl(1-\Phi(x)\bigr)\int_0^x
\bigl(1+y^2\bigr)e^{y^2/2} \,dP\bigl(W^{(i)}+s > y
\bigr)
\nonumber
\\
&&\qquad\leq\frac{ 2 }{1+x^3} P\bigl(W^{(i)}\geq x-s\bigr) + 2 \bigl(1-
\Phi(x)\bigr)P\bigl(W^{(i)}+s \leq0\bigr)
\nonumber\\
&&\qquad\quad{} + \sqrt{2\pi} \bigl(1-\Phi(x)\bigr)P\bigl(W^{(i)}+s
>0\bigr) +
\sqrt{2\pi} \bigl(1-\Phi(x)\bigr) J(s)
\nonumber
\\
&&\qquad\leq\frac{ 2 }{1+x^3} P\bigl(W^{(i)} \geq x-s\bigr) + \sqrt
{2\pi}
\bigl(1-\Phi(x)\bigr) + \sqrt{2\pi} \bigl(1-\Phi(x)\bigr) J(s),
\nonumber
\end{eqnarray}
where
%
\begin{equation} \label{t71-3}
J(s) = \int_0^x \bigl(3y+y^3
\bigr) e^{y^2/2} P\bigl(W^{(i)}+s >y\bigr) \,dy.
\end{equation}
Clearly, for $0 < t \leq x$
\begin{eqnarray*}
Ee^{t \xi_j} & =& 1 + t^2 E\xi_j^2 / 2
+ \sum_{k=3}^\infty\frac{ ( t \xi_j)^k }{ k!}
\\
& \leq& 1 + t^2 E\xi_j^2 /2 +
\frac{ t^3 }{6} E |\xi_j|^3 e^{t |\xi
_j|}
\\
& \leq& \exp\biggl( t^2 E\xi_j^2/2 +
\frac{ x^3 }{6} E |\xi_j|^3 e^{
x |\xi_j|} \biggr)
\end{eqnarray*}
and hence
%
\begin{equation}\label{t71-5}
Ee^{t(W^{(i)}+s)} \leq\exp\biggl( t^2/2 + x |s| +
\frac{ x^3 }{6} \gamma\biggr) \qquad\mbox{for } 0 \leq t \leq x.
\end{equation}
By (\ref{t71-5}), following the proof of Lemma~\ref{l22} yields
%
\begin{equation} \label{t71-6}
J(s) \leq C \bigl(1+x^3\bigr) e^{ x^3 \gamma+ x |s|}.
\end{equation}
Noting that (\ref{t71-5}) also implies that
\begin{eqnarray*}
P\bigl( W^{(i)} \geq x -s\bigr) & \leq& e^{-x^2}
Ee^{x ( W^{(i)}+s)} \leq\exp\bigl( - x^2/2 + x|s| + x^3
\gamma\bigr)
\\
& \leq& ( 1+ x) \bigl( 1- \Phi(x)\bigr) \exp\bigl( x|s| + x^3 \gamma
\bigr),
\end{eqnarray*}
we have
\[
Eg\bigl(W^{(i)}+s\bigr) \leq C \bigl(1+x^3\bigr) \bigl( 1-
\Phi(x)\bigr) e^{ x^3 \gamma+ x |s|}
\]
and therefore by (\ref{r1-1}),
%
\begin{eqnarray} \label{t71-7}\quad
|R_1| & \leq& \sum_{i =1}^n
E \int_{-\infty}^\infty\biggl|\int_{\xi_i}^t
g\bigl( W^{(i)} +s\bigr) \,ds \biggr| K_i(t) \,dt
\nonumber
\\
& \leq& C \bigl( 1+ x^3\bigr) \bigl(1-\Phi(x)\bigr) e^{x^3 \gamma}
\sum_{i=1}^n E\int_{-\infty}^\infty
\bigl(|t| e^{x|t|} + |\xi_i| e^{x|\xi_i|}\bigr)
K_i(t) \,dt
\\
& \leq& C \bigl(1+x^3\bigr) \gamma\bigl(1- \Phi(x)\bigr)
e^{x^3 \gamma}.\nonumber
\end{eqnarray}
This proves (\ref{r1}).

As to $R_2$, we apply an exponential concentration inequality of
\citet{Shao2010} [see Theorem 2.7 in \citet{Shao2010}]: for
$a\geq0$ and $b \geq0$,
\begin{eqnarray*}
&&
P\bigl( x - a \leq W^{(i)} \le x +b\bigr)
\\
&&\qquad\leq C e^{x \gamma+ x a - x^2} \bigl( (\gamma+ b+a) E\bigl|W^{(i)}\bigr|
e^{x W^{(i)}} + \bigl( E e^{2x W^{(i)}}\bigr)^{1/2} \exp\bigl( -
\gamma^{-2}/32\bigr) \bigr)
\\
&&\qquad\leq C e^{x \gamma+ x a - x^2} \bigl( (\gamma+ b+a) \bigl(EW^{(i)}
e^{x W^{(i)}} + 1\bigr)\\
&&\qquad\quad\hspace*{57.2pt}{}+ \bigl( E e^{2x W^{(i)}}\bigr)^{1/2} \exp
\bigl( - \gamma^{-2}/32\bigr) \bigr)
\\
&&\qquad\leq C e^{x \gamma+ x a - x^2} \bigl((\gamma+ b+a)  (1+x)
e^{x^2/2 + x^3 \gamma} +
e^{ x^2 + x^3\gamma} \exp\bigl( - \gamma^{-2}/32\bigr) \bigr)
\\
&&\qquad\leq C e^{x^3 \gamma+ x a - x^2/2} \bigl( (\gamma+ b+a) (1+x)
+ \exp\bigl(
x^2/2 - \gamma^{-2}/32\bigr) \bigr)
\\
&&\qquad\leq C \bigl(1-\Phi(x)\bigr) e^{x^3 \gamma+ xa} \bigl( (\gamma
+ b+a)
\bigl(1+x^2\bigr) + \exp\bigl( x^2 - \gamma^{-2}/32
\bigr) \bigr).
\end{eqnarray*}
Here we use the fact that $E W^{(i)} e^{x W^{(i)}} \leq x e^{x^2/2 +
x^3 \gamma}$, by following the proof of
(\ref{t71-5}). Therefore
\begin{eqnarray*}
R_2 & \leq& \sum_{i=1}^n E
\int_{-\infty}^\infty P\bigl( x - \xi_i
\leq W^{(i)} \leq x -t | \xi_i\bigr) K_i(t)
\,dt
\\
& \leq& C \bigl(1-\Phi(x)\bigr) e^{x^3 \gamma} \sum
_{i=1}^n \int_{-\infty}^\infty
\bigl\{ \bigl(1+x^2\bigr) E\bigl(\gamma+ |t|+|\xi_i|\bigr)
e^{x |\xi_i|}\\
&&\hspace*{159pt}{} + \exp\bigl( x^2 - \gamma^{-2}/32\bigr)
\bigr\} K_i(t) \,dt
\\
& \leq& C \bigl( 1- \Phi(x)\bigr) e^{x^3 \gamma} \bigl( \bigl(1+x^2
\bigr) \gamma+ \exp\bigl( x^2 - \gamma^{-2}/32\bigr) \bigr)
\\
& \leq& C \gamma\bigl(1+x^2\bigr) \bigl( 1- \Phi(x)\bigr)
e^{x^3 \gamma}
\end{eqnarray*}
by (\ref{t71-00}).
Similarly, the above bound holds for $-R_2$. This proves (\ref{R2}).

\section*{Acknowledgments}
We would like to thank the Associate Editor and the referees for their
helpful comments and suggestions which have significantly improved the
presentation of this paper.



\printaddresses


\begin{thebibliography}{32}

\bibitem[\protect\citeauthoryear{Baldi, Rinott and Stein}{1989}]{BRS89}
\begin{bincollection}[mr]
\bauthor{\bsnm{Baldi},~\bfnm{P.}\binits{P.}},
  \bauthor{\bsnm{Rinott},~\bfnm{Y.}\binits{Y.}} \AND
  \bauthor{\bsnm{Stein},~\bfnm{C.}\binits{C.}}
(\byear{1989}).
\btitle{A normal approximation for the number of local maxima of a random
  function on a graph}.
In \bbooktitle{Probability, Statistics, and Mathematics}
\bpages{59--81}.
\bpublisher{Academic Press}, \baddress{Boston, MA}.
\bid{mr={1031278}}
\bptok{imsref}%
\end{bincollection}
\endbibitem

\bibitem[\protect\citeauthoryear{Barbour}{1988}]{Bar88}
\begin{barticle}[mr]
\bauthor{\bsnm{Barbour},~\bfnm{A.~D.}\binits{A.~D.}}
(\byear{1988}).
\btitle{Stein's method and {P}oisson process convergence}.
\bjournal{J. Appl. Probab.}
\bvolume{25A}
\bpages{175--184}.
\bid{issn={0021-9002}, mr={0974580}}
\bptok{imsref}%
\end{barticle}
\endbibitem

\bibitem[\protect\citeauthoryear{Barbour}{1990}]{Bar90}
\begin{barticle}[mr]
\bauthor{\bsnm{Barbour},~\bfnm{A.~D.}\binits{A.~D.}}
(\byear{1990}).
\btitle{Stein's method for diffusion approximations}.
\bjournal{Probab. Theory Related Fields}
\bvolume{84}
\bpages{297--322}.
\bid{doi={10.1007/BF01197887}, issn={0178-8051}, mr={1035659}}
\bptok{imsref}%
\end{barticle}
\endbibitem

\bibitem[\protect\citeauthoryear{Barbour and Chen}{2005}]{CB05}
\begin{bbook}[mr]
\bauthor{\bsnm{Barbour},~\bfnm{A.~D.}\binits{A.~D.}} \AND
  \bauthor{\bsnm{Chen},~\bfnm{Louis H.~Y.}\binits{L.~H.~Y.}}
(\byear{2005}).
\btitle{An Introduction to Stein Method}.
\bseries{Lect. Notes Ser. Inst. Math. Sci. Natl. Univ. Singap.}
\bvolume{4}.
\bpublisher{Singapore Univ. Press and World Scientific}, \baddress{Singapore}.
\bptok{imsref}%
\end{bbook}
\endbibitem

\bibitem[\protect\citeauthoryear{Chatterjee}{2007}]{Cha07}
\begin{barticle}[mr]
\bauthor{\bsnm{Chatterjee},~\bfnm{Sourav}\binits{S.}}
(\byear{2007}).
\btitle{Stein's method for concentration inequalities}.
\bjournal{Probab. Theory Related Fields}
\bvolume{138}
\bpages{305--321}.
\bid{doi={10.1007/s00440-006-0029-y}, issn={0178-8051}, mr={2288072}}
\bptok{imsref}%
\end{barticle}
\endbibitem

\bibitem[\protect\citeauthoryear{Chatterjee}{2008}]{Cha08}
\begin{barticle}[mr]
\bauthor{\bsnm{Chatterjee},~\bfnm{Sourav}\binits{S.}}
(\byear{2008}).
\btitle{A new method of normal approximation}.
\bjournal{Ann. Probab.}
\bvolume{36}
\bpages{1584--1610}.
\bid{doi={10.1214/07-AOP370}, issn={0091-1798}, mr={2435859}}
\bptok{imsref}%
\end{barticle}
\endbibitem

\bibitem[\protect\citeauthoryear{Chatterjee and Dey}{2010}]{ChD10}
\begin{barticle}[mr]
\bauthor{\bsnm{Chatterjee},~\bfnm{Sourav}\binits{S.}} \AND
  \bauthor{\bsnm{Dey},~\bfnm{Partha~S.}\binits{P.~S.}}
(\byear{2010}).
\btitle{Applications of {S}tein's method for concentration inequalities}.
\bjournal{Ann. Probab.}
\bvolume{38}
\bpages{2443--2485}.
\bid{doi={10.1214/10-AOP542}, issn={0091-1798}, mr={2683635}}
\bptok{imsref}%
\end{barticle}
\endbibitem

\bibitem[\protect\citeauthoryear{Chatterjee and Shao}{2011}]{ChS11}
\begin{barticle}[mr]
\bauthor{\bsnm{Chatterjee},~\bfnm{Sourav}\binits{S.}} \AND
  \bauthor{\bsnm{Shao},~\bfnm{Qi-Man}\binits{Q.-M.}}
(\byear{2011}).
\btitle{Nonnormal approximation by {S}tein's method of exchangeable pairs with
  application to the {C}urie--{W}eiss model}.
\bjournal{Ann. Appl. Probab.}
\bvolume{21}
\bpages{464--483}.
\bid{doi={10.1214/10-AAP712}, issn={1050-5164}, mr={2807964}}
\bptok{imsref}%
\end{barticle}
\endbibitem

\bibitem[\protect\citeauthoryear{Chen}{1975}]{Chen75}
\begin{barticle}[mr]
\bauthor{\bsnm{Chen},~\bfnm{Louis H.~Y.}\binits{L.~H.~Y.}}
(\byear{1975}).
\btitle{Poisson approximation for dependent trials}.
\bjournal{Ann. Probab.}
\bvolume{3}
\bpages{534--545}.
\bid{mr={0428387}}
\bptok{imsref}%
\end{barticle}
\endbibitem

\bibitem[\protect\citeauthoryear{Chen, Hwang and Zacharovas}{2011}]{CHSZ11}
\begin{bmisc}[auto:STB|2012/07/16|09:13:59]
\bauthor{\bsnm{Chen},~\bfnm{L.~H.~Y.}\binits{L.~H.~Y.}},
  \bauthor{\bsnm{Hwang},~\bfnm{H.~K.}\binits{H.~K.}} \AND
  \bauthor{\bsnm{Zacharovas},~\bfnm{V.}\binits{V.}}
(\byear{2011}).
\bhowpublished{Distribution of the sum-of-digits function of random integers: A
  survey. Unpublished manuscript.}
\bptok{imsref}%
\end{bmisc}
\endbibitem

\bibitem[\protect\citeauthoryear{Chen and R{\"o}llin}{2010}]{CR09}
\begin{bmisc}[auto:STB|2012/07/16|09:13:59]
\bauthor{\bsnm{Chen},~\bfnm{L.~H.~Y.}\binits{L.~H.~Y.}} \AND
  \bauthor{\bsnm{R{\"o}llin},~\bfnm{A.}\binits{A.}}
(\byear{2010}).
\bhowpublished{Stein couplings for normal approximation. Preprint. Available at
  \url{http://arxiv.org/abs/1003.6039}.}
\bptok{imsref}%
\end{bmisc}
\endbibitem

\bibitem[\protect\citeauthoryear{Chen and Shao}{2001}]{CS01}
\begin{barticle}[mr]
\bauthor{\bsnm{Chen},~\bfnm{Louis H.~Y.}\binits{L.~H.~Y.}} \AND
  \bauthor{\bsnm{Shao},~\bfnm{Qi-Man}\binits{Q.-M.}}
(\byear{2001}).
\btitle{A non-uniform {B}erry--{E}sseen bound via {S}tein's method}.
\bjournal{Probab. Theory Related Fields}
\bvolume{120}
\bpages{236--254}.
\bid{doi={10.1007/PL00008782}, issn={0178-8051}, mr={1841329}}
\bptok{imsref}%
\end{barticle}
\endbibitem

\bibitem[\protect\citeauthoryear{Chen and Shao}{2004}]{CS04}
\begin{barticle}[mr]
\bauthor{\bsnm{Chen},~\bfnm{Louis H.~Y.}\binits{L.~H.~Y.}} \AND
  \bauthor{\bsnm{Shao},~\bfnm{Qi-Man}\binits{Q.-M.}}
(\byear{2004}).
\btitle{Normal approximation under local dependence}.
\bjournal{Ann. Probab.}
\bvolume{32}
\bpages{1985--2028}.
\bid{doi={10.1214/009117904000000450}, issn={0091-1798}, mr={2073183}}
\bptok{imsref}%
\end{barticle}
\endbibitem

\bibitem[\protect\citeauthoryear{Chen and Shao}{2005}]{CS05}
\begin{bincollection}[mr]
\bauthor{\bsnm{Chen},~\bfnm{Louis H.~Y.}\binits{L.~H.~Y.}} \AND
  \bauthor{\bsnm{Shao},~\bfnm{Qi-Man}\binits{Q.-M.}}
(\byear{2005}).
\btitle{Stein's method for normal approximation}.
In \bbooktitle{An Introduction to {S}tein's Method}
(\beditor{A. D. Barbour} and \beditor{L. H. Y. Chen}, eds.).
\bseries{Lect. Notes Ser. Inst. Math. Sci. Natl. Univ. Singap.}
\bvolume{4}
\bpages{1--59}.
\bpublisher{Singapore Univ. Press and World Scientific}, \baddress{Singapore}.
\bid{doi={10.1142/9789812567680_0001}, mr={2235448}}
\bptok{imsref}%
\end{bincollection}
\endbibitem

\bibitem[\protect\citeauthoryear{Cram{\'e}r}{1938}]{Cr38}
\begin{barticle}[auto:STB|2012/07/16|09:13:59]
\bauthor{\bsnm{Cram{\'e}r},~\bfnm{H.}\binits{H.}}
(\byear{1938}).
\btitle{Sur un nouveau th\'eor\`eme-limite de la th\'eorie des probabilit\'es}.
\bjournal{Actualit\'es Scientifiques et Industrielles}
\bvolume{736}
\bpages{5--23}.
\bptok{imsref}%
\end{barticle}
\endbibitem

\bibitem[\protect\citeauthoryear{Dembo and Rinott}{1996}]{DR96}
\begin{bincollection}[mr]
\bauthor{\bsnm{Dembo},~\bfnm{Amir}\binits{A.}} \AND
  \bauthor{\bsnm{Rinott},~\bfnm{Yosef}\binits{Y.}}
(\byear{1996}).
\btitle{Some examples of normal approximations by {S}tein's method}.
In \bbooktitle{Random Discrete Structures ({M}inneapolis, {MN}, 1993)}.
\bseries{The IMA Volumes in Mathematics and its Applications}
\bvolume{76}
\bpages{25--44}.
\bpublisher{Springer}, \baddress{New York}.
\bid{doi={10.1007/978-1-4612-0719-1_3}, mr={1395606}}
\bptok{imsref}%
\end{bincollection}
\endbibitem

\bibitem[\protect\citeauthoryear{Diaconis}{1977}]{Dia77}
\begin{barticle}[mr]
\bauthor{\bsnm{Diaconis},~\bfnm{Persi}\binits{P.}}
(\byear{1977}).
\btitle{The distribution of leading digits and uniform distribution {${\rm
  mod}$} {$1$}}.
\bjournal{Ann. Probab.}
\bvolume{5}
\bpages{72--81}.
\bid{mr={0422186}}
\bptok{imsref}%
\end{barticle}
\endbibitem

\bibitem[\protect\citeauthoryear{Diaconis and Holmes}{2004}]{DH04}
\begin{bbook}[auto:STB|2012/07/16|09:13:59]
\bauthor{\bsnm{Diaconis},~\bfnm{P.}\binits{P.}} \AND
  \bauthor{\bsnm{Holmes},~\bfnm{S.}\binits{S.}}
(\byear{2004}).
\btitle{Stein's Method: Expository Lectures and Applications}.
\bseries{Institute of Mathematical Statistics Lecture Notes}
\bvolume{46}.
\bpublisher{IMS}, \baddress{Hayward, CA}.
\bptok{imsref}%
\end{bbook}
\endbibitem

\bibitem[\protect\citeauthoryear{Ellis}{1985}]{E85}
\begin{bbook}[mr]
\bauthor{\bsnm{Ellis},~\bfnm{Richard~S.}\binits{R.~S.}}
(\byear{1985}).
\btitle{Entropy, Large Deviations, and Statistical Mechanics}.
\bseries{Grundlehren der Mathematischen Wissenschaften [Fundamental Principles
  of Mathematical Sciences]}
\bvolume{271}.
\bpublisher{Springer}, \baddress{New York}.
\bid{doi={10.1007/978-1-4613-8533-2}, mr={0793553}}
\bptok{imsref}%
\end{bbook}
\endbibitem

\bibitem[\protect\citeauthoryear{Ellis and Newman}{1978a}]{EN78}
\begin{barticle}[mr]
\bauthor{\bsnm{Ellis},~\bfnm{Richard~S.}\binits{R.~S.}} \AND
  \bauthor{\bsnm{Newman},~\bfnm{Charles~M.}\binits{C.~M.}}
(\byear{1978}a).
\btitle{The statistics of {C}urie--{W}eiss models}.
\bjournal{J. Stat. Phys.}
\bvolume{19}
\bpages{149--161}.
\bid{issn={0022-4715}, mr={0503332}}
\bptok{imsref}%
\end{barticle}
\endbibitem

\bibitem[\protect\citeauthoryear{Ellis and Newman}{1978b}]{EN782}
\begin{barticle}[mr]
\bauthor{\bsnm{Ellis},~\bfnm{Richard~S.}\binits{R.~S.}} \AND
  \bauthor{\bsnm{Newman},~\bfnm{Charles~M.}\binits{C.~M.}}
(\byear{1978}b).
\btitle{Limit theorems for sums of dependent random variables occurring in
  statistical mechanics}.
\bjournal{Z. Wahrsch. Verw. Gebiete}
\bvolume{44}
\bpages{117--139}.
\bid{issn={0178-8051}, mr={0503333}}
\bptok{imsref}%
\end{barticle}
\endbibitem

\bibitem[\protect\citeauthoryear{Goldstein}{2005}]{Gol05}
\begin{barticle}[mr]
\bauthor{\bsnm{Goldstein},~\bfnm{Larry}\binits{L.}}
(\byear{2005}).
\btitle{Berry--{E}sseen bounds for combinatorial central limit theorems and
  pattern occurrences, using zero and size biasing}.
\bjournal{J. Appl. Probab.}
\bvolume{42}
\bpages{661--683}.
\bid{issn={0021-9002}, mr={2157512}}
\bptok{imsref}%
\end{barticle}
\endbibitem

\bibitem[\protect\citeauthoryear{Goldstein and Reinert}{1997}]{GR97}
\begin{barticle}[mr]
\bauthor{\bsnm{Goldstein},~\bfnm{Larry}\binits{L.}} \AND
  \bauthor{\bsnm{Reinert},~\bfnm{Gesine}\binits{G.}}
(\byear{1997}).
\btitle{Stein's method and the zero bias transformation with application to
  simple random sampling}.
\bjournal{Ann. Appl. Probab.}
\bvolume{7}
\bpages{935--952}.
\bid{doi={10.1214/aoap/1043862419}, issn={1050-5164}, mr={1484792}}
\bptok{imsref}%
\end{barticle}
\endbibitem

\bibitem[\protect\citeauthoryear{Linnik}{1961}]{Lin61}
\begin{bincollection}[mr]
\bauthor{\bsnm{Linnik},~\bfnm{Ju.~V.}\binits{J.~V.}}
(\byear{1961}).
\btitle{On the probability of large deviations for the sums of independent
  variables}.
In \bbooktitle{Proc. 4th {B}erkeley {S}ympos. {M}ath. {S}tatist. and {P}rob.,
  {V}ol. {II}}
\bpages{289--306}.
\bpublisher{Univ. California Press}, \baddress{Berkeley, CA}.
\bid{mr={0137142}}
\bptok{imsref}%
\end{bincollection}
\endbibitem

\bibitem[\protect\citeauthoryear{Nourdin and Peccati}{2009}]{NP09}
\begin{barticle}[mr]
\bauthor{\bsnm{Nourdin},~\bfnm{Ivan}\binits{I.}} \AND
  \bauthor{\bsnm{Peccati},~\bfnm{Giovanni}\binits{G.}}
(\byear{2009}).
\btitle{Stein's method on {W}iener chaos}.
\bjournal{Probab. Theory Related Fields}
\bvolume{145}
\bpages{75--118}.
\bid{doi={10.1007/s00440-008-0162-x}, issn={0178-8051}, mr={2520122}}
\bptok{imsref}%
\end{barticle}
\endbibitem

\bibitem[\protect\citeauthoryear{Petrov}{1975}]{Pet75}
\begin{bbook}[mr]
\bauthor{\bsnm{Petrov},~\bfnm{V.~V.}\binits{V.~V.}}
(\byear{1975}).
\btitle{Sums of Independent Random Variables}.
\bpublisher{Springer}, \baddress{New York}.
\bid{mr={0388499}}
\bptok{imsref}%
\end{bbook}
\endbibitem

\bibitem[\protect\citeauthoryear{Rai{\v{c}}}{2007}]{Raic07}
\begin{barticle}[mr]
\bauthor{\bsnm{Rai{\v{c}}},~\bfnm{Martin}\binits{M.}}
(\byear{2007}).
\btitle{C{LT}-related large deviation bounds based on {S}tein's method}.
\bjournal{Adv. in Appl. Probab.}
\bvolume{39}
\bpages{731--752}.
\bid{doi={10.1239/aap/1189518636}, issn={0001-8678}, mr={2357379}}
\bptok{imsref}%
\end{barticle}
\endbibitem

\bibitem[\protect\citeauthoryear{Rinott and Rotar}{1997}]{RR97}
\begin{barticle}[mr]
\bauthor{\bsnm{Rinott},~\bfnm{Yosef}\binits{Y.}} \AND
  \bauthor{\bsnm{Rotar},~\bfnm{Vladimir}\binits{V.}}
(\byear{1997}).
\btitle{On coupling constructions and rates in the {CLT} for dependent summands
  with applications to the antivoter model and weighted {$U$}-statistics}.
\bjournal{Ann. Appl. Probab.}
\bvolume{7}
\bpages{1080--1105}.
\bid{doi={10.1214/aoap/1043862425}, issn={1050-5164}, mr={1484798}}
\bptok{imsref}%
\end{barticle}
\endbibitem

\bibitem[\protect\citeauthoryear{Shao}{2010}]{Shao2010}
\begin{binproceedings}[mr]
\bauthor{\bsnm{Shao},~\bfnm{Qi-Man}\binits{Q.-M.}}
(\byear{2010}).
\btitle{Stein's method, self-normalized limit theory and applications}.
In \bbooktitle{Proceedings of the {I}nternational {C}ongress of
  {M}athematicians. {V}olume {IV}}
\bpages{2325--2350}.
\bpublisher{Hindustan Book Agency}, \baddress{New Delhi}.
\bid{mr={2827974}}
\bptok{imsref}%
\end{binproceedings}
\endbibitem

\bibitem[\protect\citeauthoryear{Stein}{1972}]{Ste72}
\begin{binproceedings}[mr]
\bauthor{\bsnm{Stein},~\bfnm{Charles}\binits{C.}}
(\byear{1972}).
\btitle{A bound for the error in the normal approximation to the distribution
  of a sum of dependent random variables}.
In \bbooktitle{Proceedings of the {S}ixth {B}erkeley {S}ymposium on
  {M}athematical {S}tatistics and {P}robability ({U}niv. {C}alifornia,
  {B}erkeley, {C}alif., 1970/1971), {V}ol. {II}: {P}robability Theory}
\bpages{583--602}.
\bpublisher{Univ. California Press}, \baddress{Berkeley, CA}.
\bid{mr={0402873}}
\bptok{imsref}%
\end{binproceedings}
\endbibitem

\bibitem[\protect\citeauthoryear{Stein}{1986}]{Ste86}
\begin{bbook}[mr]
\bauthor{\bsnm{Stein},~\bfnm{Charles}\binits{C.}}
(\byear{1986}).
\btitle{Approximate Computation of Expectations}.
\bseries{Institute of Mathematical Statistics Lecture Notes---Monograph Series}
\bvolume{7}.
\bpublisher{IMS}, \baddress{Hayward, CA}.
\bid{mr={0882007}}
\bptok{imsref}%
\end{bbook}
\endbibitem

\end{thebibliography}
\end{document}